\newcommand{\D}{\mathrm{D}}
\long\def\symbolfootnote[#1]#2{\begingroup%
\def\thefootnote{\fnsymbol{footnote}}\footnote[#1]{#2}\endgroup}
\newtheorem{thm}{Theorem}[section]
\newtheorem{prop}[thm]{Proposition}
\newtheorem{lem}[thm]{Lemma}
\theoremstyle{definition}
\newtheorem{defn}[thm]{Definition}
\newtheorem{example}[thm]{Example}
\theoremstyle{remark}
\newtheorem{rem}[thm]{Remark}
\title{Cauchy singular integral operator with parameters in  Log-H\"older spaces }
\author{Yifei Pan and Yuan Zhang\footnote{partially supported by NSF DMS-1501024}}
\date{}
\begin{document}

\maketitle

\begin{abstract}
This paper is motivated by a claim in  the classical textbook of Muskhelishvili concerning  the Cauchy singular integral operator $S$ on H\"older functions with   parameters. To the contrary of  the claim, a counter example   was constructed by Tumanov which shows that $S$ with parameters fails to maintain the same H\"older regularity with respect to the parameters.
In view of the example,   the behavior of the Cauchy singular integral operator with parameters between a type of  Log-H\"older spaces is  investigated to obtain the sharp norm estimates. At the end of the paper, we discuss its application to the $\bar\partial$ problem on product domains.
\end{abstract}

\section{Introductions}

Let $D$  be a bounded domain in $\mathbb C$,  $\Lambda$ be (the closure of) an open set in $\mathbb R$ or $\mathbb C$ and $\Omega: = D\times \Lambda$.  In particular, $\partial D$ consists of a finite number of $C^{1, \alpha}$  Jordan
curves possessing no points in common.  Given a complex-valued function $f\in C^\alpha(\Omega)$, define the Cauchy singular integral  along the slice $D$ as follows. For any $(z, \lambda)\in \Omega$,
\begin{equation}\label{S2}
      S f(z, \lambda):=\frac{1}{2\pi i}\int_{\partial D}\frac{f(\zeta,\lambda)}{\zeta-z}d\zeta.
 \end{equation}
%Suppose in addition that $\partial D$ is $C^{1, \alpha}$ and  that $f\in C^{ \alpha}(\Omega),  0<\alpha< 1$.
Classical singular integral operators theory in one complex variable states that, there exists a constant $C$ dependent only on $\Omega $ and $ \alpha$, such that
  $Sf(\cdot, \lambda)\in C^{ \alpha}(D)$ for each  $\lambda\in \Lambda$, and $$\|Sf(\cdot, \lambda)\|_{C^{ \alpha}(D)}\le C\|f(\cdot, \lambda)\|_{C^{ \alpha}(D)}.$$ (See for instance \cite{Muskhelishvili}\cite{V} et al.) It is plausible to ask whether $S$ in (\ref{S2}) is a bounded linear operator in $C^{\alpha}(\Omega)$. The question was claimed to be true by Muskhelishvili (see \cite{Muskhelishvili} p. 49-50). In fact,   Muskhelishvili's proof only  shows that given any arbitrarily small $\epsilon$ with $0<\epsilon<\alpha$, $S$ is bounded sending  $C^{ \alpha}(\Omega)$ into $C^{ \alpha-\epsilon}(\Omega)$.

To the contrary of Muskhelishvili's claim, Tumanov \cite{Tumanov} (p. 486) constructed a concrete example showing that $S$ with parameters fails to maintain the same H\"older regularity with respect to the parameters.
In order to  study the optimal parameter dependence of $S$ in (\ref{S2}) on $\lambda$, we introduce the following Log-H\"older spaces,  which are considered as  refined H\"older spaces and would  naturally capture the boundedness of the Cauchy singular integral operator.

%we construct a concrete example satisfying the following theorem, proving   that $S$ defined in (\ref{S2}) is not a bounded linear operator in $C^{\alpha}(\Omega)$. More specifically,

%\begin{thm}\label{example}
%There exists a product domain $\Omega: =D\times \Lambda$, where  $D$ and $\Lambda$ are both  bounded domains in $\mathbb C$ with smooth boundary, and a family of functions $\{f_n\}_{n=1}^\infty\in C^{\alpha}(\Omega), 0<\alpha<1$,  such that the sequence $\|f_n\|_{C^{\alpha}(\Omega)}$ is  bounded, however $\|S f_n\|_{C^{\alpha}(\Omega)}\rightarrow \infty$.
%\end{thm}

\begin{defn}\label{def}
Let $\Omega$  be a  domain in $ \mathbb R^n$, $k\in \mathbb Z^+\cup \{0\}$, $0<\alpha\le 1$ and $\nu\in\mathbb R$. A function $f\in C^k(\Omega)$ is said to be in $ C^{k, L^\alpha Log^\nu L}(\Omega) $ if
$$\|f\|_{C^{k, L^\alpha Log^\nu L} (\Omega)}:  =\sum_{|\gamma|=0}^k\sup_{w\in \Omega}|D^\gamma f(w)|+ \sum_{|\gamma|=k}\sup_{w, w+h\in \Omega, ,\\ 0<|h|\le \frac{1}{2}} \frac{|D^\gamma f(w+h) - D^\gamma f(w)|}{ |h|^\alpha|\ln |h||^{\nu}}<\infty.$$
%Here $h(m, \alpha)=e^{-\frac{m}{\alpha}}.$ % is a constant dependent only on $m$ and $\alpha$, such that

\end{defn}
Note that when $\alpha=1$ and $\nu<0$, $ C^{k, L^\alpha Log^\nu L}(\Omega) $  consists of constant functions only and thus becomes trivial. Without loss of generality, we always assume $\nu\ge 0$ if $\alpha=1$ in the rest of the paper. It can be verified that $ C^{k, L^\alpha Log^\nu L} (\Omega)$ is a Banach space. Moreover,
  for any $\mu, \nu\in \mathbb R^+, k\in\mathbb Z^+\cup\{0\},  0<\epsilon< \alpha< 1$,  $ C^{k, \alpha+\epsilon}(\Omega)\xhookrightarrow{i}  C^{k, L^\alpha Log^{-\nu-\mu}L} (\Omega) \xhookrightarrow{i}   C^{k, L^\alpha Log^{-\nu}L} (\Omega) \xhookrightarrow{i}  C^{k, \alpha}(\Omega)\xhookrightarrow{i}   C^{k, L^\alpha Log^\nu L} (\Omega)\xhookrightarrow{i}   C^{k, L^\alpha Log^{\nu+\mu}L} (\Omega)\xhookrightarrow{i}  C^{k, \alpha-\epsilon} (\Omega) $, where the  inclusion map $i$ at each level is a continuous embedding.  The Log-H\"older space $ C^{k, L^\alpha Log^\nu L}(\Omega)$ reduces to the well-understood Log-Lipschitz space $ C^{k, L^1LogL}(\Omega)$ when $k=0$ and $\nu=\alpha=1$, and to H\"older space $C^{k, \alpha}(\Omega)$  when $\nu=0$. Our main theorem stated below  shows that $S$ is a bounded operator from $C^{k, L^\alpha Log^\nu L} (\Omega)$ into $C^{k, L^\alpha Log^{\nu+1}  L} (\Omega), k\in \mathbb Z^+\cup \{0\},  0<\alpha\le 1, \nu\in\mathbb R$.

%In this note, we consider a $\lambda$-parameterized family of integrable functions $f_\lambda$ on $ D$, where $\lambda \in T$  an open bounded subset in $\mathbb C^n$, and study the dependence of $S$ on $\lambda$. It is well known that when $\partial D$ is smooth enough, $S$ is a bounded operator sending $C^{(k, \alpha)}(D)$ into $C^{(k, \alpha)}(D)$, $0<\alpha<1, k\in\mathbb Z^+\cup \{0\}$ (See.... for instance). Here $C^{(k, \alpha)}(D)$ is the standard H\"older space. It is not clear whether  $S$ is a bounded operator in $\lambda$.  We introduce a family of generalized H\"older spaces, called Log-H\"older spaces as follows.

\begin{thm}\label{main}
Let $D$  be a bounded domain in $\mathbb C$ with $C^{k, \alpha}$  boundary,  $k\in \mathbb Z^+\cup \{0\},  0<\alpha\le 1$,  $\Lambda$ be an open set in $\mathbb R$ or $\mathbb C$, and $\Omega: = D\times \Lambda$.   Then $S$ defined in (\ref{S2}) sends $C^{k, L^\alpha Log^\nu L} (\Omega)$ into $C^{k, L^\alpha Log^{\nu+1}  L} (\Omega)$,  $ \nu\in\mathbb R$. Moreover, for any $f\in C^{k, L^\alpha Log^\nu L} (\Omega)$,
  \begin{equation*}
      \|Sf\|_{C^{k, L^\alpha Log^{\nu+1}  L} (\Omega)}\le C\|f\|_{C^{k, L^\alpha Log^\nu L} (\Omega)},
  \end{equation*}
  where $C$ is some constant dependent only on $\Omega, k, \alpha$ and $\nu$.
\end{thm}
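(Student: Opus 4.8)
\section*{Proof proposal}

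The plan is to reduce the general statement to the base case $k=0$ and then to localize the estimate to a single increment, isolating the effect of the parameter variable. For the reduction, observe that $S$ commutes with differentiation in $\lambda$, since $\lambda$ enters only as a parameter in the integrand: $\partial_\lambda^m Sf = S(\partial_\lambda^m f)$. Differentiation in the $z$-variables raises the order of the Cauchy kernel, but using that $\partial D$ is $C^{k,\alpha}$ one can transfer these derivatives onto tangential derivatives of $f(\cdot,\lambda)$ along $\partial D$ by integration by parts, at the cost of lower-order terms with coefficients built from the ($C^{k-1,\alpha}$) boundary parametrization. In this way every $D^\gamma Sf$ with $|\gamma|=k$ is written as a finite sum of expressions of the form $S(g_\beta)$, where each $g_\beta\in C^{0,L^\alpha Log^\nu L}(\Omega)$ is controlled by $\|f\|_{C^{k,L^\alpha Log^\nu L}(\Omega)}$. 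It then suffices to prove the $k=0$ estimate and to track that the logarithmic gain is the same for each $g_\beta$.

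For $k=0$, the sup-norm bound $\|Sf\|_{L^\infty(\Omega)}\le C\|f\|$ follows slice-wise from the classical boundedness of the Cauchy integral on $C^\alpha(D)$ (equivalently on the Log--H\"older class), uniformly in $\lambda$. The seminorm is the heart of the matter. Given $w=(z,\lambda)$ and $w+h=(z+h_z,\lambda+h_\lambda)$ in $\Omega$ with $0<|h|\le \tfrac12$, I would insert the intermediate point $(z,\lambda+h_\lambda)$ and write $Sf(w+h)-Sf(w)$ as the sum of a pure $z$-increment $Sf(z+h_z,\lambda+h_\lambda)-Sf(z,\lambda+h_\lambda)$ at the frozen parameter $\lambda+h_\lambda$, and a pure $\lambda$-increment $Sf(z,\lambda+h_\lambda)-Sf(z,\lambda)=S(\Delta_\lambda f)(z,\lambda)$, where $\Delta_\lambda f(\zeta):=f(\zeta,\lambda+h_\lambda)-f(\zeta,\lambda)$. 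The first term is governed by the single-variable theory, which preserves the modulus of continuity in $z$ without logarithmic loss; since $t\mapsto t^\alpha|\ln t|^{\nu}$ is increasing for small $t$ and $|h_z|\le|h|$, this term is $\le C\|f\|\,|h|^\alpha|\ln|h||^{\nu}\le C\|f\|\,|h|^\alpha|\ln|h||^{\nu+1}$, with room to spare.

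The single extra logarithm, and hence the sharpness of the target space $C^{k,L^\alpha Log^{\nu+1}L}$, comes entirely from the $\lambda$-increment $S(\Delta_\lambda f)$. The key is that $g:=\Delta_\lambda f(\cdot,\lambda)$ satisfies two competing bounds: a small uniform bound $\|g\|_{L^\infty(\partial D)}\le \|f\|\,|h_\lambda|^\alpha|\ln|h_\lambda||^\nu=:A$ coming from the parameter regularity, together with the spatial Log--H\"older seminorm $|g(\zeta_1)-g(\zeta_2)|\le 2\|f\|\,|\zeta_1-\zeta_2|^\alpha|\ln|\zeta_1-\zeta_2||^\nu$ of order $O(\|f\|)$. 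Writing $Sg(z)=g(z_0)+\frac{1}{2\pi i}\int_{\partial D}\frac{g(\zeta)-g(z_0)}{\zeta-z}\,d\zeta$ with $z_0\in\partial D$ nearest to $z$, I would split the integral at the crossover scale $\rho$ where the two bounds on $|g(\zeta)-g(z_0)|$ balance, namely $\|f\|\rho^\alpha|\ln\rho|^\nu\approx A$, so that $\rho\approx|h_\lambda|$. The inner region contributes $O(A)$ because $\alpha>0$ makes $\int_0^\rho s^{\alpha-1}|\ln s|^\nu\,ds$ converge, while the outer region, where only the uniform bound is available, contributes $\int_\rho^{\mathrm{diam}\,D}\frac{A}{s}\,ds\approx A\,\ln\tfrac{1}{\rho}\approx A|\ln|h_\lambda||=\|f\|\,|h_\lambda|^\alpha|\ln|h_\lambda||^{\nu+1}$. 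Monotonicity of $t\mapsto t^\alpha|\ln t|^{\nu+1}$ and $|h_\lambda|\le|h|$ then yield the bound $C\|f\|\,|h|^\alpha|\ln|h||^{\nu+1}$.

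I expect the outer-region estimate to be the main obstacle: it is where Tumanov's obstruction manifests, and the whole point is that the logarithmically divergent kernel integral, truncated at the $\lambda$-dependent scale $\rho\approx|h_\lambda|$, produces exactly one---and no more than one---additional power of $|\ln|h_\lambda||$. Making this precise requires care in (i) the geometry near $\partial D$ so that $|\zeta-z|\gtrsim\max(\mathrm{dist}(z,\partial D),|\zeta-z_0|)$ holds uniformly (using the $C^{1,\alpha}$ structure of the boundary), (ii) treating the case $\mathrm{dist}(z,\partial D)\gtrsim\rho$, where the kernel is nonsingular and the bound is immediate, and (iii) verifying that the crossover scale is genuinely $\rho\approx|h_\lambda|$ even when $\nu\neq 0$, which follows because $\rho$ is determined by the same profile $t^\alpha|\ln t|^\nu$ on both sides. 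Collecting the two increments and summing over the finitely many $g_\beta$ from the reduction step completes the proof.
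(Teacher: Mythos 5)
Your proposal is correct in substance, and it shares the paper's skeleton: reduce to $k=0$ by integration by parts along $\partial D$ (the paper gets a single term $S\tilde f$ with $\tilde f=\partial_z^{\gamma_1}D_\lambda^{\gamma_2}f$, citing Vekua/PZ), bound the $C^k$ norm by embedding into $C^{k,\epsilon}$, split the increment into a pure $z$-increment and a pure $\lambda$-increment (the paper's Lemma \ref{comp}), and dispose of the $z$-increment by the one-variable Log-H\"older theory (the paper's Proposition \ref{S1}). Where you genuinely diverge is the $\lambda$-increment, which is the decisive step. The paper proves the estimate (\ref{S}) only at boundary points $z=t\in\partial D$ via the Sokhotski--Plemelj formula --- there the term $I_2$, with $\int_{|h|}^{s_1/2}\frac{ds}{s}$, produces the single extra logarithm --- and then transfers it to interior points by applying the Maximum Modulus Principle to $z\mapsto \bigl(S\tilde f(z,\lambda+h)-S\tilde f(z,\lambda)\bigr)/\bigl(|h|^\alpha|\ln|h||^{\nu+1}\bigr)$, which is holomorphic in $z$ and, by Plemelj--Privalov, continuous up to $\partial D$. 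You instead estimate directly at an interior point: writing $Sg(z)=g(z_0)+\frac{1}{2\pi i}\int_{\partial D}\frac{g(\zeta)-g(z_0)}{\zeta-z}\,d\zeta$ with $z_0$ the nearest boundary point (using $\frac{1}{2\pi i}\int_{\partial D}\frac{d\zeta}{\zeta-z}=1$ and $|\zeta-z|\ge\frac12|\zeta-z_0|$, which holds automatically for the nearest point), you play the uniform bound $A:=\|f\|\,|h_\lambda|^\alpha|\ln|h_\lambda||^{\nu}$ against the spatial modulus $\lesssim\|f\|\,s^\alpha|\ln s|^{\nu}$ and split the contour at the crossover scale $\rho\approx|h_\lambda|$: the inner piece is $O(A)$ by the convergent integral of Lemma \ref{elem}(1), and the outer piece is $A\ln(1/\rho)\approx\|f\|\,|h_\lambda|^\alpha|\ln|h_\lambda||^{\nu+1}$. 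This is a valid alternative. Your route is more elementary for this step --- no boundary-value formula and no maximum principle --- and makes completely explicit that the single logarithm comes from the truncated $\int ds/s$; the paper's route only has to carry out the crude estimate on $\partial D$ itself and lets holomorphy in $z$ handle all interior points for free, at the cost of invoking Plemelj--Privalov boundary regularity.

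Two caveats you should repair. First, your claim that the $z$-increment is handled ``without logarithmic loss'' is false when $\alpha=1$: the one-variable operator maps $C^{L^1 Log^{\nu}L}(D)$ only into $C^{L^1 Log^{\nu+1}L}(D)$ (paper's Theorem \ref{ok1}); this is harmless here because one log still fits inside the target space, but the assertion needs correcting. Second, the one-variable boundedness of $S$ on $C^{L^\alpha Log^{\nu}L}(D)$ is not classical and is not ``equivalent'' to the $C^\alpha$ theory: it is the content of the paper's entire Section 4, requiring the kernel estimates to be redone with the weight $|\ln s|^{\nu}$ (Lemma \ref{elem}, Lemmas 4.3--4.5, Proposition \ref{S1}). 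Your crossover technique would in fact prove it, but as written you are citing as known a result that itself needs proof.
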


In view of Tumanov's example, Theorem \ref{main} is optimal in the sense that the target space $C^{k, L^\alpha Log^{\nu+1}  L} (\Omega)$ can not be replaced by  $C^{k, L^\alpha Log^{\nu+\mu}  L}(\Omega)$ for any $\mu<1$. %, Remark \ref{re}  argues that for any $0<\mu<1$, the same family $\{f_n\}_{n=1}^\infty$ as in Theorem \ref{example} satisfies  $\|S f_n\|_{C^{L^\alpha Log^\mu L}(\Omega)}\rightarrow \infty$ while $\|f_n\|_{C^{\alpha}(\Omega)}$ maintains  bounded.
As an application of the theorem, we study solutions  in Log-H\"older spaces to the $\bar\partial$ problem  on product domains, improving the regularity result of \cite{PZ}.

\begin{thm}
 \label{cor}
Let $D_j\subset\mathbb C, j= 1, \ldots, n, $ be bounded domains with  $C^{k+1,\alpha}$ boundary, $ n\ge 2,  k\in \mathbb Z^+\cup \{0\}, 0<\alpha\le 1$, and $\Omega: = D_1\times\cdots\times D_n$. Assume $\mathbf f=\sum_{j=1}^nf_jd\bar z_j\in C^{k, L^\alpha Log^\nu L}(\Omega), \nu\in \mathbb R,$ is a $\bar\partial$-closed (0,1) form on $\Omega$  (in the sense of distributions if $k=0$). There exists a  solution operator $T$ to  $\bar\partial u =\mathbf f$
 such that  $T\mathbf f\in C^ {k, L^\alpha Log^{\nu+n-1}L}(\Omega)$,  $\bar\partial T\mathbf f = \mathbf f $ (in the sense of distributions if $k=0$) and $\|T\mathbf f\|_{C^{k, L^\alpha Log^{\nu+n-1}L}(\Omega)}\le C\|\mathbf f\|_{C^{k, L^\alpha Log^{\nu}L}(\Omega)}$, where $C$ depends only on $\Omega, k, \alpha$ and $\nu$.
\end{thm}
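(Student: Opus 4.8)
The plan is to produce $T$ by an explicit homotopy (telescoping) formula assembled from two families of one-variable operators acting slicewise on $\Omega=D_1\times\cdots\times D_n$: the solid Cauchy transform in the $j$-th variable,
\[
\mathcal{T}_jg(z):=\frac{1}{2\pi i}\int_{D_j}\frac{g(z_1,\dots,z_{j-1},\zeta,z_{j+1},\dots,z_n)}{\zeta-z_j}\,d\bar\zeta\wedge d\zeta,
\]
which satisfies $\partial_{\bar z_j}\mathcal{T}_jg=g$, and the boundary Cauchy transform $S_j$, namely the operator $S$ of (\ref{S2}) applied in the $z_j$-slice with the remaining variables frozen. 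The one-variable Cauchy--Pompeiu formula reads $h=S_jh+\mathcal{T}_j\partial_{\bar z_j}h$ on $\overline{D_j}$. I would then set
\[
T\mathbf f:=\sum_{j=1}^{n}\mathcal{T}_j\,S_{j-1}S_{j-2}\cdots S_1 f_j,
\]
with the convention that the product of $S$'s is the identity when $j=1$.

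Verification of $\bar\partial T\mathbf f=\mathbf f$ is the routine first step. Fixing an index $l$ and applying $\partial_{\bar z_l}$ to the $j$-th summand $u_j:=\mathcal{T}_jS_{j-1}\cdots S_1 f_j$, I would use that each $S_i$ and $\mathcal{T}_j$ act only on $z_i$, $z_j$ respectively (so they commute with $\partial_{\bar z_l}$ whenever $l$ differs from the active index), that $S_ih$ is holomorphic in $z_i$, the identity $\partial_{\bar z_j}\mathcal{T}_j=\mathrm{id}$, Cauchy--Pompeiu in the form $\mathcal{T}_j\partial_{\bar z_j}h=h-S_jh$, and the $\bar\partial$-closedness $\partial_{\bar z_l}f_j=\partial_{\bar z_j}f_l$. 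This gives $\partial_{\bar z_l}u_j=0$ for $j>l$, $\partial_{\bar z_l}u_l=S_{l-1}\cdots S_1 f_l$, and $\partial_{\bar z_l}u_j=S_{j-1}\cdots S_1 f_l-S_jS_{j-1}\cdots S_1 f_l$ for $j<l$. Summing over $j$, the last expression telescopes and the surviving terms collapse to $\partial_{\bar z_l}T\mathbf f=f_l$; since $l$ was arbitrary, $\bar\partial T\mathbf f=\mathbf f$ (interpreted distributionally when $k=0$).

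For the norm estimate I would establish two mapping properties in the Log-H\"older scale. First, each boundary Cauchy transform $S_i$ costs exactly one logarithmic power: applying Theorem \ref{main} with slice domain $D_i$ and parameter set $\Lambda=\prod_{l\ne i}\overline{D_l}$ yields $\|S_i g\|_{C^{k, L^\alpha Log^{\mu+1} L}(\overline\Omega)}\le C\|g\|_{C^{k, L^\alpha Log^{\mu} L}(\overline\Omega)}$ for every $\mu$. Second, each solid Cauchy transform $\mathcal{T}_j$ is bounded on $C^{k, L^\alpha Log^{\mu} L}(\overline\Omega)$ with no loss: it is regularity-improving in the active variable $z_j$ and, since its kernel is independent of the remaining variables, it preserves the Log-H\"older modulus in those parameter directions under the integral sign, the $C^{k+1,\alpha}$ hypothesis on each $\partial D_j$ guaranteeing that the output stays in $C^{k}$ up to the boundary. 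Chaining these, the summand $u_j$ incurs $j-1\le n-1$ logarithmic powers, so $u_j\in C^{k, L^\alpha Log^{\nu+j-1} L}(\overline\Omega)$; the embeddings recorded after Definition \ref{def} place every summand in $C^{k, L^\alpha Log^{\nu+n-1} L}(\overline\Omega)$ and yield the stated bound on $\|T\mathbf f\|$.

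The main obstacle is not the algebra of the homotopy formula but the two regularity lemmas and their compatibility. The delicate point is that the compositions $S_{j-1}\cdots S_1 f_j$ must be controlled up to the boundary in each already-processed variable, because the next operator $S_i$ takes a boundary trace in $z_i$; this forces one to run Theorem \ref{main} on the closed slice and, more substantially, to verify that its proof is insensitive to enlarging the parameter space from $\mathbb R$ or $\mathbb C$ to the higher-dimensional $\prod_{l\ne i}\overline{D_l}\subset\mathbb C^{\,n-1}$. Since the parameter enters only passively through the density, the argument establishing Theorem \ref{main} should transfer essentially verbatim, but this has to be checked. The remaining care is the solid-transform estimate: one must confirm that $\mathcal{T}_j$ genuinely preserves, rather than degrades by a logarithmic factor, the Log-H\"older regularity in the frozen variables, uniformly as $z_j$ ranges over $\overline{D_j}$, which is where the joint nature of the norm in Definition \ref{def} makes the bookkeeping nontrivial.
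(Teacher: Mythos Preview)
Your proposal is correct and follows essentially the same route as the paper: the solution operator is exactly the paper's $T\mathbf f=\sum_{j=1}^n T_jS_{j-1}\cdots S_1 f_j$ (equation (\ref{key})), the $S_i$ bound is Theorem~\ref{main}, and the claim that the solid transform $\mathcal T_j$ preserves the Log-H\"older class without loss is Proposition~\ref{ok}. The paper defers the verification of $\bar\partial T\mathbf f=\mathbf f$ to \cite{PZ}, whereas you spell out the telescoping argument; and you correctly flag that Theorem~\ref{main} must be read with a higher-dimensional parameter set $\Lambda=\prod_{l\ne i}D_l$, a point the paper uses implicitly (the proof in Section~5 only involves the parameter through Lemma~\ref{comp} and the Maximum Modulus argument, neither of which is sensitive to $\dim\Lambda$).
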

We would like to point out, unlike smooth domains,   there  is no gain of regularity phenomenon for the $\bar\partial$ problem  on product domains, as indicated  by an example of Stein and Kerzman \cite{Kerzman} in $L^\infty$ space (See also \cite{PZ} for examples in H\"older spaces). One can similarly construct examples to show that the $\bar\partial$ problem on product domains does not gain regularity in Log-H\"older spaces. Yet it is  not clear  whether there exists  a solution operator that can achieve the same regularity as that of the data space.

\begin{example}\label{ex2}
Let $\triangle^2=\{(z_1, z_2)\in \mathbb C^2: |z_1|<1, |z_2|<1\}$ be the bidisc. For each $k\in \mathbb Z^+\cup \{0\}$, $ 0<\alpha<1$ and $\nu\in\mathbb R$, consider $\bar\partial u =\mathbf f:= \bar\partial ((z_1-1)^{k+\alpha}\bar z_2\log^\nu (z_1-1))$ on $\triangle^2$, $\frac{1}{2}\pi <\arg (z_1-1)<\frac{3}{2}\pi$. Then $\mathbf f= (z_1-1)^{k+\alpha}\log^\nu (z_1-1)d\bar z_2  \in C^{k,L^\alpha Log^{\nu}L}(\triangle^2)$ is a $\bar\partial$-closed $(0,1)$ form. However, there does not exist a solution $u\in C^{k, L^\beta Log^{\nu}L}(\triangle^2)$ to $\bar\partial u =\mathbf f$ for any $\beta$ with $\beta>\alpha$.
 \end{example}

The rest of the paper is organized as follows. In Section 2, preliminaries about the function spaces and (semi-)norms are defined, as well as the classical theory about the Cauchy type integrals.  The example of Tumanov is discussed in Section 3 to show that $S$ does not send $C^\alpha(\triangle^2)$ into itself, $0<\alpha<1$. Section 4 is devoted to the boundedness of the Cauchy singular integral operator between Log-H\"older spaces on the complex plane. In Section 5 and Section 6,  Theorem \ref{main} and Theorem \ref{cor} are proved respectively, along with the verification of   Example \ref{ex2}. %At the end of the paper, an example of a bounded continuous function on the unit disc  whose Cauchy singular integral is unbounded is constructed in Appendix.

\section{Preliminaries and Notations}
Throughout the rest of the paper,    $k, \mu, \nu$ and $ \alpha$  are  always referred to (part of) the indices of the Log-H\"older spaces. $\gamma$ may represent either a positive integer or an $n$-tuple, determined by the context.  $C$ represents a constant that is dependent only on $\Omega, k,  \nu$ and $\alpha$, which may be of different values in different places.

For convenience of notations, given $f\in C^{k, L^\alpha Log^\nu L}(\Omega)$, denote by $$\|f\|_{C^k(\Omega)}: = \sum_{|\gamma|=0}^k\sup_{w\in \Omega}|D^\gamma f(w)|$$ and the  semi-norm $$ H^{\nu}[f]: =\sup_{w, w+h\in \Omega, \\ 0<|h|\le \frac{1}{2}} \frac{| f(w+h) -  f(w)|}{ |h|^\alpha|\ln |h||^{\nu}}.$$
Here  $\alpha$ is suppressed from the above notation due to a fixed value of $\alpha$  throughout the paper. When $\nu=0$, we also suppress $\nu$ and write $H[\cdot]$ for $H^0[\cdot]$.  Consequently, $\|f\|_{C^{k, L^\alpha Log^\nu L}(\Omega)} =\|f\|_{C^k(\Omega)}+ \sum_{|\gamma|=k}H^{\nu} [D^\gamma f]$.

It is worth noting that the upper bound $\frac{1}{2}$  of $|h|$  under the supreme for $H^{\nu}[f]$   is not essential. It can be replaced by any positive number less than 1 without changing the function space $C^{k, L^\alpha Log^\nu L}(\Omega) $, and the resulting norm is equivalent  by some constant  dependent only on  $D$, $\alpha$, $\nu$ and the positive number itself.

 In particular when $\Omega=D\times \Lambda$,  the H\"older semi-norms along $z$ and $\lambda$ variables for each fixed $\lambda\in \Lambda$ and fixed $z\in D$ respectively can be defined as follows.
 \begin{equation*}\begin{split}
   &H^{\nu}_D[f(\cdot,\lambda)]:=\sup_{\zeta, \zeta+h \in D, 0<|h|\le \frac{1}{2}} \frac{|f(\zeta+h, \lambda) - f(\zeta, \lambda)|}{|h|^\alpha |\ln |h||^{\nu}};\\
   &H^{\nu}_\Lambda[f(z,\cdot)]:=\sup_{\zeta, \zeta+h \in \Lambda,  0<|h|\le \frac{1}{2}} \frac{|f(z, \zeta+h) - f(z, \zeta)|}{|h|^\alpha |\ln |h||^{\nu}}.
   \end{split}
 \end{equation*}
The above two expressions are clearly bounded by $H^{\nu}[f]$ by definition. On the other hand, the following elementary property for Log-H\"older semi-norms can be observed.
\medskip

\begin{lem}\label{comp}
There exists a constant $C$ dependent only on $\Omega, \alpha$ and $\nu$, such that for any function $f
\in C^{L^\alpha Log^\nu L}(\Omega), $ \begin{equation*}
    \|f\|_{C^{ L^\alpha Log^\nu L}(\Omega)} \le C(\|f\|_{C(\Omega)}+\sup_{\lambda\in \Lambda}H^{\nu}_D[f(\cdot,\lambda)] + \sup_{z\in D}H^{\nu}_\Lambda[f(z, \cdot)]).
\end{equation*} \end{lem}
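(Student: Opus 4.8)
The plan is to control the full Log-H\"older seminorm $H^{\nu}[f]$ by the two slice seminorms through a standard ``move one coordinate at a time'' decomposition, the only nontrivial point being the comparison of the logarithmic weights. Since $\|f\|_{C^{L^\alpha Log^\nu L}(\Omega)} = \|f\|_{C(\Omega)} + H^{\nu}[f]$ and the $C(\Omega)$ part already appears on the right-hand side, it suffices to bound $H^{\nu}[f]$ by $C(\sup_{\lambda}H^{\nu}_D[f(\cdot,\lambda)] + \sup_{z}H^{\nu}_\Lambda[f(z,\cdot)])$.

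First I would fix $w=(z,\lambda)$ and $w+h=(z+h_1,\lambda+h_2)$ in $\Omega$ with $0<|h|\le\frac12$, where $h_1$ and $h_2$ denote the $D$- and $\Lambda$-components of $h$. Inserting the intermediate point $(z,\lambda+h_2)$, which lies in $\Omega$ because $z\in D$ and $\lambda+h_2\in\Lambda$, I would write
\begin{equation*}
f(z+h_1,\lambda+h_2) - f(z,\lambda) = \bigl(f(z+h_1,\lambda+h_2) - f(z,\lambda+h_2)\bigr) + \bigl(f(z,\lambda+h_2) - f(z,\lambda)\bigr).
\end{equation*}
The first difference is a pure $D$-increment at the fixed parameter $\lambda+h_2$, hence bounded by $H^{\nu}_D[f(\cdot,\lambda+h_2)]\,|h_1|^\alpha|\ln|h_1||^{\nu} \le \sup_{\lambda}H^{\nu}_D[f(\cdot,\lambda)]\,|h_1|^\alpha|\ln|h_1||^{\nu}$; the second is a pure $\Lambda$-increment at the fixed point $z$, hence bounded by $\sup_{z}H^{\nu}_\Lambda[f(z,\cdot)]\,|h_2|^\alpha|\ln|h_2||^{\nu}$. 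When $h_1=0$ or $h_2=0$ the corresponding difference vanishes identically, so no difficulty arises, and since $|h_1|,|h_2|\le|h|\le\frac12$, both slice seminorms may legitimately be applied.

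The key step, and the main obstacle, is to show that the weight $g(t):=t^\alpha|\ln t|^{\nu}$ satisfies $g(s)\le C\,g(t)$ for all $0<s\le t\le\frac12$, with $C$ depending only on $\alpha$ and $\nu$; applying this with $(s,t)=(|h_1|,|h|)$ and $(|h_2|,|h|)$ then yields $|h_i|^\alpha|\ln|h_i||^{\nu}\le C|h|^\alpha|\ln|h||^{\nu}$. To prove this monotonicity-type claim I would examine $(\ln g)'(t)=\frac{1}{t}\bigl(\alpha+\nu/(-\ln t)\bigr)$. For $\nu\ge0$ this is positive on $(0,\frac12]$, so $g$ is increasing and the inequality holds with $C=1$. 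For $\nu<0$ (whence $\alpha<1$) the only possible critical point is $t^*=e^{\nu/\alpha}$, with $g$ increasing before $t^*$ and decreasing after; a short case analysis according to the positions of $s$ and $t$ relative to $\min(t^*,\frac12)$, using that $g$ is continuous and bounded away from $0$ on any compact subinterval of $(0,\frac12]$, produces a finite $C$ depending only on $\alpha,\nu$.

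Finally, dividing the decomposed estimate by $|h|^\alpha|\ln|h||^{\nu}$, which is positive since $|h|<1$, and taking the supremum over all admissible $w$ and $h$ gives $H^{\nu}[f]\le C(\sup_{\lambda}H^{\nu}_D[f(\cdot,\lambda)] + \sup_{z}H^{\nu}_\Lambda[f(z,\cdot)])$; adding $\|f\|_{C(\Omega)}$ to both sides completes the proof. I expect the only genuinely routine work to be the case analysis in the weight comparison, everything else being a direct triangle-inequality argument.
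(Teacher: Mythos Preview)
Your proof is correct and follows essentially the same route as the paper: decompose the increment via an intermediate point in the product and then compare $|h_i|^\alpha|\ln|h_i||^{\nu}$ to $|h|^\alpha|\ln|h||^{\nu}$ using monotonicity of $t\mapsto t^\alpha|\ln t|^\nu$. The paper handles the weight comparison by restricting to $|h|\le r_0:=\min\{e^{-\nu/\alpha},\tfrac12\}$ and invoking the equivalence-of-norms remark, whereas your case analysis for $\nu<0$ treats the full range $(0,\tfrac12]$ directly; both are fine.
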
%If there exists a constant $C$, such that for each $\lambda\in \Lambda $, %$D^\gamma f(z_1,\cdots, z_{j-1},\zeta, z_{j+1}, \cdots, z_2)\in C^{\alpha}(D_j)$ $\lambda\in \Lambda$ and each $z\in D$ $f(\zeta, \lambda)$  as a function of $\zeta\in D$ satisfies  $$H_D^{\nu}[ f(\cdot, \lambda)]\le C;$$and for each $\lambda\in \Lambda, f(z, \zeta)$  as a function of $\zeta\in \Lambda$ satisfies  $$H_\Lambda^{\nu}[ f(z, \cdot)]\le C,$$ then $ H^{\nu}[f]\le 2C $ with the same constant C.

\begin{proof}
 We only need to show $H^{\nu} [ f]\le C(\|f\|_{C(\Omega)}+\sup_{\lambda\in \Lambda}H^{\nu}_D[f(\cdot,\lambda)] + \sup_{z\in D}H^{\nu}_\Lambda[f(z, \cdot)])$. Indeed, for any $w=(z, \lambda)\in D\times \Lambda, w+h=(z+h_1, \lambda+h_2)\in D\times \Lambda$ with $|h|\le r_0: = \min\{e^{-\frac{\nu}{\alpha}}, \frac{1}{2}\}$, then $(z+h_1, \lambda)\in D\times \Lambda$. Hence \begin{equation*}
     \begin{split}
         |f(w+h)- f(w)|&\le | f(z+h_1, \lambda+h_2)-f(z+h_1, \lambda)|+| f(z+h_1, \lambda) - f(z, \lambda)|\\
         &\le  |h_2|^\alpha|\ln |h_2||^{\nu}\sup_{z\in D}H^{\nu}_\Lambda[f(z, \cdot)] + |h_1|^\alpha|\ln |h_1||^{\nu}\sup_{\lambda\in \Lambda}H^{\nu}_D[f(\cdot,\lambda)] \\
         &\le  |h|^\alpha|\ln |h||^{\nu}(\sup_{\lambda\in \Lambda}H^{\nu}_D[f(\cdot,\lambda)] + \sup_{z\in D}H^{\nu}_\Lambda[f(z, \cdot)]).
     \end{split}
 \end{equation*}
   Here the last inequality is due to the non-decreasing property of the real-valued function $s^\alpha|\ln s|^\nu$ on the interval $(0, r_0)$.
\end{proof}

Let $D$ be a bounded domain in $\mathbb C$ with $C^{k+1,\alpha}$  boundary, $k\in \mathbb Z^+\cup \{0\}, 0<\alpha\le 1$. Given a complex valued  function $f\in C(\bar D)$,  the following two operators related to the Cauchy kernel are well defined for $z\in D$.

\begin{equation}\label{TS1}
\begin{split}
 Tf(z): &=\frac{-1}{2\pi i}\int_\D \frac{f(\zeta)}{\zeta- z}d\bar{\zeta}\wedge d\zeta;\\
Sf(z): &=\frac{1}{2\pi i}\int_{\partial D}\frac{f(\zeta)}{\zeta- z}d\zeta.
\end{split}
\end{equation}
Here  the positive orientation of $\partial D$ is  such  that the domain $D$ is always to its left while traversing along the contour(s). We state  some classical results concerning the Cauchy type integrals $T$ and $S$ on the complex plane.  The reader may check for instance  \cite{V}  for reference.

\begin{thm}
\label{123}
  Let D be a bounded domain with $C^{k+1, \alpha}$ boundary.\\
 1) If $f\in L^p(D), p>2$, then $Tf\in C^\alpha(D),  \alpha=\frac{p-2}{p}$. Moreover, $$\|Tf\|_{C^{\alpha}(D)}\le C\|f\|_{L^p},$$
 for some constant $C$ dependent only on $D$ and $p$.\\  % Moreover, $\bar\partial T = id$ on $L^p(D), 1\le p<\infty$ in the sense of distributions.\\
 2) If $f\in C^{k, \alpha}(D), k\in \mathbb Z^+\cup \{0\}, 0<\alpha<1$. Then $Tf\in C^{k+1, \alpha}(D)$ and $Sf\in C^{k, \alpha}(D)$. Moreover,
  \begin{equation*}
    \begin{split}
      &\|Tf\|_{C^{k+1, \alpha}(D)}\le C\|f\|_{C^{k, \alpha}(D)};\\
      &\|Sf\|_{C^{k, \alpha}(D)}\le C\|f\|_{C^{k, \alpha}(D)}
    \end{split}
  \end{equation*}
  for some constant $C$ dependent only on $D, k$ and $\alpha$.
  \end{thm}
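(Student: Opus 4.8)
The plan is to handle the area operator $T$ and the boundary operator $S$ separately, reducing each assertion to standard kernel estimates. For $T$ it is convenient to rewrite the integral in real form: since $d\bar\zeta\wedge d\zeta=2i\,dA(\zeta)$, one has $Tf(z)=-\tfrac1\pi\int_D\frac{f(\zeta)}{\zeta-z}\,dA(\zeta)$, so that every bound reduces to controlling the weakly singular kernel $|\zeta-z|^{-1}$ and its $z$-derivative.

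For part (1), set $p'=\tfrac{p}{p-1}$, which satisfies $p'<2$ because $p>2$. The sup bound is immediate from H\"older's inequality, since $|\zeta-z|^{-p'}$ is integrable over the bounded set $D$ uniformly in $z$, giving $\|Tf\|_{C(D)}\le C\|f\|_{L^p}$. For the H\"older seminorm with exponent $\alpha=1-\tfrac2p=\tfrac{p-2}{p}$, I would fix $z_1,z_2\in D$, put $d=|z_1-z_2|$, and split the integral of $f(\zeta)\big(\tfrac{1}{\zeta-z_1}-\tfrac{1}{\zeta-z_2}\big)$ over the ball $B=B(z_1,2d)$ and over $D\setminus B$. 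On $B$ the two terms are estimated separately by H\"older's inequality; the $L^{p'}$ norm of $|\zeta-z_i|^{-1}$ over a ball of radius comparable to $d$ is of order $d^{2/p'-1}=d^\alpha$. On $D\setminus B$ one uses $\big|\tfrac{1}{\zeta-z_1}-\tfrac{1}{\zeta-z_2}\big|\le\tfrac{2d}{|\zeta-z_1|^2}$ together with H\"older's inequality, and the remaining integral of $|\zeta-z_1|^{-2p'}$ over $|\zeta-z_1|\ge 2d$ again produces the factor $d^\alpha$. Summing the two contributions gives the claimed estimate.

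For part (2), I would induct on $k$, the heart being the base case $k=0$. For $T$, the Cauchy--Pompeiu formula yields $\partial_{\bar z}Tf=f\in C^\alpha(D)$, so it remains to show $\partial_z Tf\in C^\alpha(D)$. Differentiating under the integral gives the principal-value (Beurling-type) transform $\partial_z Tf(z)=-\tfrac1\pi\,\mathrm{p.v.}\int_D\frac{f(\zeta)}{(\zeta-z)^2}\,dA(\zeta)$; subtracting $f(z)$ rewrites it as $-\tfrac1\pi\int_D\frac{f(\zeta)-f(z)}{(\zeta-z)^2}\,dA(\zeta)$ plus a term in which $f(z)$ multiplies a $C^\alpha$ function of $z$ obtained from the p.v.\ kernel by Stokes' theorem. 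The H\"older continuity of this regularized integral is proved by the same ball-splitting and subtraction estimates as in part (1), now using $f\in C^\alpha$ to absorb the $(\zeta-z)^{-2}$ singularity; this gives $Tf\in C^{1,\alpha}(D)$. For $S$, this is Privalov's theorem: writing $Sf(z)=\tfrac{1}{2\pi i}\int_{\partial D}\frac{f(\zeta)-f(z_0)}{\zeta-z}\,d\zeta+\tfrac{f(z_0)}{2\pi i}\int_{\partial D}\frac{d\zeta}{\zeta-z}$, the subtraction removes the leading singularity, and the $C^{1,\alpha}$ smoothness of $\partial D$ turns the arclength estimates into the $C^\alpha$ bound. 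To pass from $k$ to $k+1$, I would express the derivatives of $Tf$ and $Sf$ through $T$ and $S$ applied to the derivatives of $f$, integrating by parts along $\partial D$ and using its $C^{k+1,\alpha}$ regularity, so the higher-order bounds follow from the base case together with the boundedness of $S$ on $C^\alpha$.

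The main obstacle is the $C^\alpha\to C^\alpha$ continuity of the two genuinely singular operators, namely the Beurling transform $\partial_z T$ and the boundary operator $S$ itself. Their kernels are not absolutely integrable, so the estimates must exploit cancellation through the subtraction of $f(z)$ (respectively $f(z_0)$) and the geometry of $\partial D$; doing this uniformly, and then tracking the boundary integration-by-parts terms required for the inductive step on a $C^{k+1,\alpha}$ boundary, is where the real work lies. As all of this is classical, I would alternatively simply invoke \cite{V}.
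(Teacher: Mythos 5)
Your proposal is sound, but there is nothing in the paper to compare it against: Theorem \ref{123} is stated as classical background, with the reader referred to \cite{V} and no proof given --- which is precisely your own fallback option in the last sentence. So the comparison is really between your sketch and the classical literature. Your part (1) argument (H\"older's inequality for the sup bound, then the splitting into $B(z_1,2d)$ and $D\setminus B$ with the kernel-difference bound $2d/|\zeta-z_1|^2$ off the ball) is the standard proof and is essentially complete; the exponent bookkeeping $2/p'-1=\alpha$ checks out. Your part (2) likewise follows the classical route found in \cite{V}: regularize $\partial_z Tf$ by subtracting $f(z)$, reduce the p.v.\ of the kernel to a boundary integral by Stokes' theorem, invoke Plemelj--Privalov for $S$, and induct on $k$ by integration by parts (note that $\partial_z Tf$ equals $T(\partial_\zeta f)$ plus a Cauchy-type boundary integral involving $d\bar\zeta$, so the boundedness of $S$-type integrals is genuinely needed inside the induction, as you anticipate). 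The one step your sketch elides --- and which the paper itself flags in Section 4 as the nontrivial point --- is the passage from the boundary H\"older estimate for $Sf$ (which is what Plemelj--Privalov gives) to the H\"older estimate on all of $D$: since $Sf$ is holomorphic in $D$, this is done either by applying the maximum modulus principle to a branch of $\bigl(Sf(z)-Sf(z')\bigr)/(z-z')^\alpha$, by the Hardy--Littlewood lemma, or by the nontangential-approach-region argument of \cite{Mclean} that the paper adopts for its own Log-H\"older estimates. With that step supplied, or with the citation of \cite{V} carrying the proof as the paper itself chooses to do, your proposal is correct.
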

\medskip

%The following theorem states that $T$ is the solution operator to $\bar\partial$ on $L^p$ space.

\section{$S$ does not sent $C^{\alpha}(\triangle^2)$ into itself}
In this section, we verify in detail  Tumanov's  example  in \cite{Tumanov}  (See also \cite{MP}) that $S$ defined in (\ref{S2}) does not send $C^{\alpha}(\triangle^2)$ into itself, $0<\alpha<1$.
Define for $\lambda\in \triangle$,
$$
  \tilde f(e^{i\theta}, \lambda) =
  \left\{
      \begin{array}{cc}
     |\lambda|^\alpha, & -\pi\le \theta \le -|\lambda|^\frac{1}{2}; \\
     \theta^{2\alpha}, & -|\lambda|^\frac{1}{2}\le \theta\le 0; \\
      \theta^\alpha, & 0\le \theta\le |\lambda|;\\
      |\lambda|^\alpha, &|\lambda|\le \theta\le \pi.
    \end{array}
\right.$$
Then $\tilde f\in C^\alpha(\partial \triangle \times \triangle)$. Extend $\tilde f$ onto $\triangle^2$,  denoted as $f$, such that $f\in C^\alpha(\triangle^2) $ and $\|f\|_{C^\alpha(\triangle^2)} = \|\tilde f\|_{C^\alpha(\partial \triangle \times \triangle)}$. (For instance, for each $w\in \triangle^2$, let $f(w): = \inf_{\eta\in \partial \triangle \times \triangle}\{\tilde f(\eta)+ M|w-\eta|^\alpha\}$, where $M = \|\tilde f\|_{C^\alpha(\partial \triangle \times \triangle)}$.)

We first show that  $Sf(1, \cdot)\notin C^\alpha(\triangle)$. Indeed, a direct computation gives for  $\lambda\in \triangle$,
\begin{equation*}
    \begin{split}
  2\pi i Sf(1, \lambda)
  = &\int_{\partial\triangle} \frac{\tilde f(\zeta, \lambda)}{\zeta-1}d\zeta\\
   = &i\int_{-\pi}^\pi \frac{\tilde f(e^{i\theta},\lambda)e^{i\theta} }{e^{i\theta}-1}d\theta   \\
   = & \frac{1}{2 }\int_{-\pi}^\pi \frac{\tilde f(e^{i\theta},\lambda)e^{\frac{i\theta}{2}} }{\sin\frac{\theta}{2}}d\theta\\
   = &\frac{1}{2 }\int_{-\pi}^\pi \tilde f(e^{i\theta},\lambda)\cot\frac{\theta}{2}d\theta  +\frac{i}{2 }\int_{-\pi}^\pi \tilde f(e^{i\theta},\lambda)d\theta
   %=&\frac{1}{2 }\int_{-\pi}^{\pi} \tilde f(e^{i\theta},\lambda)\cot(\frac{\theta}{2})d\theta + \frac{1}{2 }(\int_{(\frac{\pi}{2}, \pi)\cup (-\pi, -\frac{\pi}{2})}  \tilde f(e^{i\theta},\lambda)\cot(\frac{\theta}{2})d\theta  +    \frac{i}{2}\int_{-\pi}^\pi \tilde f(e^{i\theta},\lambda)d\theta\\
   =: I+II.
    \end{split}
\end{equation*}
Here the third equality uses the identity that $e^{i\theta}-1 = \cos\theta-1 +i\sin\theta =  2i\sin\frac{\theta}{2}e^{\frac{i\theta}{2}}$. Since $\tilde f\in C^\alpha(\partial \triangle \times \triangle)$, we have $II\in C^\alpha(\triangle)$.

On the other hand, write $$I =\frac{1}{2}\int_{-\pi}^\pi \tilde f(e^{i\theta},\lambda)(\cot\frac{\theta}{2} -\frac{2}{\theta} )d\theta +   \int_{-\pi}^{\pi} \frac{\tilde f(e^{i\theta},\lambda)}{\theta}d\theta. $$
Notice that $ \cot \frac{\theta}{2} -\frac{2}{\theta}$ extends as a continuous function on $[-\pi, \pi]$. Hence $\int_{-\pi}^\pi \tilde f(e^{i\theta},\lambda)(\cot\frac{\theta}{2} -\frac{2}{\theta} )d\theta \in  C^\alpha(\triangle)$ as a function of $\lambda\in \triangle$. For the second term in $I$,  from construction of $\tilde f$,
\begin{equation*}
    \begin{split}
 \int_{-\pi}^{\pi} \frac{\tilde f(e^{i\theta},\lambda)}{\theta}d\theta =&  \int_{-|\lambda|^\frac{1}{2}}^0 \frac{\theta^{2\alpha}}{\theta} d\theta +\int_0^{|\lambda|}\frac{\theta^\alpha}{\theta}d\theta +  \int_{|\lambda|}^{|\lambda|^\frac{1}{2}}  \frac{|\lambda|^\alpha}{\theta}d\theta \\
 = & \frac{|\lambda|^\alpha}{2\alpha} +\frac{1}{2}|\lambda|^\alpha|\ln|\lambda||.
    \end{split}
\end{equation*}
 We thus obtain $I\notin C^\alpha(\triangle)$ and hence $Sf(1, \cdot)\notin C^\alpha(\triangle)$.

Suppose by contradiction that  $Sf\in C^\alpha(\triangle^2)$. Then the non-tangential limit of $Sf$  on $\partial \triangle \times \triangle$, denoted by $\Phi f$, is in $C^\alpha$ as well. In particular, $\Phi f(1, \cdot)\in C^\alpha(\triangle)$. On the other hand,  by  Sokhotski-Plemelj formula, $\Phi f(1, \cdot) = Sf(1,\cdot) + \frac{1}{2}f(1, \cdot)$. This contradicts with the fact that $Sf(1, \cdot)\notin C^\alpha(\triangle)$.

\begin{rem}\label{rem}
For $f$ constructed above,   $Sf \notin C^{L^\alpha Log^{\mu}  L}(\triangle^2)$ for any $\mu<1$.
\end{rem}

%\medskip

%We would like to conclude the section by commenting that, making use of the Riemann mapping theorem, one can replace the domain $\Omega$ in Theorem \ref{example} by the bidisc $\triangle^2\subset \mathbb C^2$, and find a family of functions $\{f_n\}_{n=1}^\infty\in C^{\alpha}(\triangle^2)$  such that the sequence $\|f_n\|_{C^{ \alpha}(\triangle^2)}$ is  bounded, however $\|S f_n\|_{C^{\alpha}(\triangle^2)}\rightarrow \infty$. On the other hand, using the same idea as in the example above, it is not hard to construct a family of functions $\{f_n\}_{n=1}^\infty\in C^{k, \alpha}(\Omega)$  such that  $\|f_n\|_{C^{k, \alpha}(\Omega)}$ is  bounded, however $\|S f_n\|_{C^{k, \alpha}(\Omega)}\rightarrow \infty$.

\section{Cauchy singular integral in Log-H\"older spaces in $\mathbb C$}

Let $D$ be a bounded domain in $\mathbb C$ with $C^{1,\alpha}$  boundary, $k\in \mathbb Z^+\cup \{0\}, 0<\alpha\le 1$. In this section, we shall prove that $S$ defined in (\ref{TS1}) is a bounded linear operator from $C^{L^\alpha Log^\nu L}(D)$ into itself if $0<\alpha<1$, and into $C^{L^1 Log^{\nu+1}  L}(D)$ if $\alpha=1$ (and $\nu\ge 0$).   Since $C^{L^\alpha Log^\nu L}(D)$ is a subspace of $C^{\epsilon}(D)$ for $0<\epsilon<\alpha$, $Sf$ is well defined for $f\in C^{L^\alpha Log^\nu L}(D)$ by the classical theory of $S$ in H\"older spaces.

Write $\partial D = \cup_{j=1}^N \Gamma_j$, where each  Jordan curve $\Gamma_j$  is connected and positively oriented  with respect to $D$, and  of total arclength $s_j$. %So the total arclength of $\partial D$ is $s_0:=\sum_{j=1}^N s_j$. Let $\zeta_1(s)$ be a parameterization of $\partial D_1$ in terms of the arclength variable $s$.
 Since $\partial D$ is Lipschitz in particular, $\partial D$ satisfies the so-called {\em chord-arc} condition. In other words, for any $t, t'\in \Gamma_j, j=1, \ldots, N$, let $|t, t'|$ be the smaller length of the two arcs of $\Gamma_j$ with $t$ and $t'$ as the two end points. There exists a  constant $c_0\ge 1$  dependent only on $\partial D$ such that
\begin{equation}\label{arc}
    |t-t'|\le |t, t'|\le c_0|t-t'|.
\end{equation}

The following calculus lemma is elementary but will be frequently used in this section.
\begin{lem}\label{elem}
Let $0<\alpha\le 1$ and $\nu\in \mathbb R$. There exists a constant $C$ dependent only on $\alpha$ and $\nu$, such that for all $0<h\le h_0:=\min\{e^{-\frac{2\nu}{\alpha}}, e^\frac{2\nu}{1-\alpha}, \frac{1}{2}\}$,\\
1) $\int_0^h s^{\alpha-1}|\ln s|^{\nu} ds\le Ch^\alpha|\ln h|^{\nu}$ when $0<\alpha\le1$.\\
2) $\int_h^{h_0} s^{\alpha-2}|\ln s|^{\nu} ds\le\left\{
      \begin{array}{cc}
    Ch^{\alpha-1}|\ln h|^{\nu}, & 0<\alpha<1; \\
     C|\ln h|^{\nu+1}, & \alpha =1.
    \end{array}
\right. $
\end{lem}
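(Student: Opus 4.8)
The plan is to prove both estimates by comparing each integrand to the derivative of an explicit monotone ``barrier'' function and then invoking the Fundamental Theorem of Calculus, rather than evaluating the integrals directly. For part 1) I would take $\phi(s):=s^\alpha|\ln s|^\nu$, which is exactly the function whose non-decreasing behaviour was used in the proof of Lemma \ref{comp}; for part 2) with $0<\alpha<1$ I would take $g(s):=s^{\alpha-1}|\ln s|^\nu$. The whole point of the specific threshold $h_0=\min\{e^{-2\nu/\alpha},e^{2\nu/(1-\alpha)},\tfrac12\}$ is to force the relevant derivative to dominate the integrand, up to a multiplicative constant, on all of $(0,h_0)$.

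For part 1), a direct computation gives
\[
\phi'(s)=s^{\alpha-1}|\ln s|^{\nu-1}\bigl(\alpha|\ln s|-\nu\bigr)=s^{\alpha-1}|\ln s|^{\nu}\Bigl(\alpha-\frac{\nu}{|\ln s|}\Bigr)
\]
for $0<s<1$. On $(0,h_0)$ one has $|\ln s|\ge|\ln h_0|\ge \tfrac{2\nu}{\alpha}$ when $\nu>0$ (this is where $h_0\le e^{-2\nu/\alpha}$ enters), so $\nu/|\ln s|\le\alpha/2$ and the bracket is at least $\alpha/2$; when $\nu\le0$ the bracket is already $\ge\alpha$. Hence $s^{\alpha-1}|\ln s|^\nu\le\frac{2}{\alpha}\phi'(s)$ pointwise, and integrating from $0$ to $h$ (the boundary term at $0$ vanishes because $\alpha>0$) yields $\int_0^h s^{\alpha-1}|\ln s|^\nu\,ds\le\frac{2}{\alpha}\phi(h)=\frac{2}{\alpha}h^\alpha|\ln h|^\nu$.

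Part 2) with $\alpha=1$ I would handle separately and directly: the substitution $u=|\ln s|$ turns $\int_h^{h_0}s^{-1}|\ln s|^\nu\,ds$ into $\int_{|\ln h_0|}^{|\ln h|}u^\nu\,du=\frac{1}{\nu+1}\bigl(|\ln h|^{\nu+1}-|\ln h_0|^{\nu+1}\bigr)\le\frac{1}{\nu+1}|\ln h|^{\nu+1}$, which is where the gain of one logarithmic power appears (and uses $\nu\ge0$). For $0<\alpha<1$ I would repeat the barrier argument with $g$: since
\[
-g'(s)=s^{\alpha-2}|\ln s|^{\nu}\Bigl((1-\alpha)+\frac{\nu}{|\ln s|}\Bigr),
\]
for $\nu<0$ the constraint $h_0\le e^{2\nu/(1-\alpha)}$ guarantees $|\nu|/|\ln s|\le(1-\alpha)/2$ on $(0,h_0)$, while for $\nu\ge0$ the term $\nu/|\ln s|$ is nonnegative; either way the bracket stays $\ge(1-\alpha)/2>0$, so $s^{\alpha-2}|\ln s|^\nu\le\frac{2}{1-\alpha}(-g'(s))$. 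Integrating from $h$ to $h_0$ and discarding the nonnegative term $g(h_0)$ gives the desired $\frac{2}{1-\alpha}h^{\alpha-1}|\ln h|^\nu$.

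The main obstacle is the case $\nu<0$ of part 2) with $0<\alpha<1$. There the naive monotonicity bound $|\ln s|^\nu\le|\ln h|^\nu$ fails, since it points the wrong way, and bounding $s^{\alpha-2}|\ln s|^\nu$ by $s^{-1}\cdot h^{\alpha-1}|\ln h|^\nu$ and integrating $ds/s$ loses exactly one logarithm, overshooting the target. The barrier computation resolves this precisely because the factor $2$ built into the exponents of $h_0$ keeps the lower-order term $\nu/|\ln s|$ below half of the leading term, so the sign and size of $-g'$ are controlled; verifying this sign condition in each regime of $\nu$ is the only delicate point, the rest being routine.
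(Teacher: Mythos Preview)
Your proof is correct and is essentially the same argument as the paper's: the paper does integration by parts and then absorbs the lower-order term using $1-\tfrac{\nu}{\alpha|\ln s|}\ge\tfrac12$ (respectively $1+\tfrac{\nu}{(1-\alpha)|\ln s|}\ge\tfrac12$), which is exactly your pointwise comparison $s^{\alpha-1}|\ln s|^\nu\le\tfrac{2}{\alpha}\phi'(s)$ (respectively $s^{\alpha-2}|\ln s|^\nu\le\tfrac{2}{1-\alpha}(-g'(s))$) integrated over the interval. The $\alpha=1$ case is handled identically in both.
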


\begin{proof}
1) Using  integration by part directly, $$ \int_0^h s^{\alpha-1}|\ln s|^{\nu} ds= \frac{1}{\alpha}\int_0^h |\ln s|^{\nu} ds^\alpha = \frac{1}{\alpha}h^\alpha|\ln h|^{\nu}  + \frac{\nu}{\alpha}\int_0^h s^{\alpha-1}|\ln s|^{\nu -1} ds.$$
If $\nu \le 0$, the lemma follows directly from  the above identity by dropping off the last negative term. If $\nu > 0$, since $s\le h_0\le  e^\frac{-2\nu }{\alpha}$,  $1-\frac{\nu }{\alpha|\ln s|}\ge \frac{1}{2}$, which implies $ \int_0^h s^{\alpha-1}|\ln s|^{\nu} ds - \frac{\nu }{\alpha}\int_0^h s^{\alpha-1}|\ln s|^{\nu -1} ds =  \int_0^h s^{\alpha-1}|\ln s|^{\nu}(1-\frac{\nu}{\alpha|\ln s|}) ds \ge \frac{1}{2} \int_0^h s^{\alpha-1}|\ln s|^{\nu} ds$. Hence
$$ \int_0^h s^{\alpha-1}|\ln s|^{\nu} ds\le  \frac{2}{\alpha}h^\alpha|\ln h|^{\nu}.$$

2) When $0<\alpha<1$, $$\int_h^{h_0} s^{\alpha-2}|\ln s|^{\nu} ds =  \frac{1}{1-\alpha}( h^{\alpha-1}|\ln h|^{\nu} -h_0^{\alpha-1}|\ln h_0|^{\nu})-\frac{\nu}{1-\alpha} \int_h^{h_0} s^{\alpha-2}|\ln s|^{\nu-1} ds.$$
%Note that when $0<s\le h_0$, $s^{\alpha-1}|\ln s|^{\nu}$  is nonincreasing, $h_0^{\alpha-1}|\ln h_0|^{\nu} \le h^{\alpha-1}|\ln h|^{\nu}$.
So we have
$$\int_h^{h_0} s^{\alpha-2}|\ln s|^{\nu} ds \le  \frac{1}{1-\alpha}h^{\alpha-1}|\ln h|^{\nu} -\frac{\nu }{1-\alpha} \int_h^{h_0} s^{\alpha-2}|\ln s|^{\nu -1} ds.$$
If $\nu \ge 0$, the lemma is proved as in 1). If $\nu <0$, notice $1+\frac{\nu }{(1-\alpha)|\ln s|}\ge \frac{1}{2}$ when $s\le h_0\le e^\frac{2\nu }{1-\alpha}$, we have $\int_h^{h_0} s^{\alpha-2}|\ln s|^{\nu} ds + \frac{\nu }{1-\alpha} \int_h^{h_0} s^{\alpha-2}|\ln s|^{\nu -1} ds = \int_h^{h_0} s^{\alpha-2}|\ln s|^{\nu} (1+\frac{\nu }{(1-\alpha)|\ln s|}) ds\ge \frac{1}{2}\int_h^{h_0} s^{\alpha-2}|\ln s|^{\nu} ds. $ Hence $\int_h^{h_0} s^{\alpha-2}|\ln s|^{\nu} ds\le \frac{2}{1-\alpha}h^{\alpha-1}|\ln h|^{\nu}$.

When $\alpha =1$ and $\nu \ge 0$, %since $|\ln s|^{\nu+1}$ is nonincreasing when $0<s\le 1$,
$$\int_h^{h_0} \frac{|\ln s|^{\nu}}{s} ds = \frac{1}{\nu +1}(|\ln h|^{\nu+1}-|\ln h_0|^{\nu+1} )\le  \frac{1}{\nu +1}|\ln h|^{\nu+1}.$$Both desired inequalities are proved.
\end{proof}

We first consider points on $\partial D$. When $t\in\partial D$, by Sokhotski-Plemelj Formula (see \cite{Muskhelishvili} for instance), the nontangential limit of $Sf$ at $t\in\partial D$ is
$$\Phi  f(t): = Sf(t)+\frac{1}{2} f(t): =\frac{1}{2\pi i}\int_{\partial D}\frac{f(\zeta)}{\zeta-t}d\zeta + \frac{1}{2} f(t)= \frac{1}{2\pi i}\int_{\partial D}\frac{f(\zeta)-f(t)}{\zeta-t}d\zeta + f(t).$$
Here $Sf(t)= \frac{1}{2\pi i}\int_{\partial D}\frac{f(\zeta)}{\zeta-t}d\zeta$ is interpreted as the Principal Value when $t\in\partial D$ and  is well defined if $f$ is in H\"older spaces. In particular,  $\frac{1}{2\pi i}\int_{\partial D}\frac{1}{\zeta-t}d\zeta = \frac{1}{2}$ when $t\in \partial D$.  Let $h_0$ and $c_0$ be  defined as in Lemma \ref{elem} and (\ref{arc}) respectively,  $s_0: =\min_{1\le j\le N}\{s_j\} >0$ and $\delta_0: = \inf_{1\le j\ne m\le N}\{|t-t'|: t\in \Gamma_j,  t'\in \Gamma_m\}>0$.

\begin{lem}\label{int}
Let $0< \alpha\le 1$. If $f\in C^{L^\alpha Log^\nu L}(D)$, then for $t, t+h\in \partial D$ with $|h|\le \min\{\frac{h_0}{3c_0}, \frac{s_0}{6c_0}, \frac{\delta_0}{2} \}$,
\begin{equation*}
|\Phi f(t+h)-\Phi f(t)|\le \left\{
      \begin{array}{cc}
     C\|f\|_{C^{L^\alpha Log^\nu L}(D)}|h|^\alpha|\ln| h||^{\nu}, & 0<\alpha<1; \\
     C\|f\|_{C^{L^1 Log^\nu L}(D)}|\ln| h||^{\nu+1}, & \alpha =1
    \end{array}
\right.
\end{equation*}
for a constant $C$ dependent only on $D, \alpha$ and $\nu$.
\end{lem}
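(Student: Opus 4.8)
The plan is to start from the Sokhotski--Plemelj representation
$$\Phi f(t)=\frac{1}{2\pi i}\int_{\partial D}\frac{f(\zeta)-f(t)}{\zeta-t}\,d\zeta+f(t),$$
so that
$$\Phi f(t+h)-\Phi f(t)=\bigl(f(t+h)-f(t)\bigr)+\frac{1}{2\pi i}\,J,\qquad J:=\int_{\partial D}\Bigl[\frac{f(\zeta)-f(t+h)}{\zeta-(t+h)}-\frac{f(\zeta)-f(t)}{\zeta-t}\Bigr]d\zeta.$$
The first summand is disposed of at once: $|f(t+h)-f(t)|\le H^{\nu}[f]\,|h|^\alpha|\ln|h||^{\nu}$, which is dominated by the asserted bound in both regimes (for $\alpha=1$ one uses $|h|\le C|\ln|h||^{\nu+1}$ for small $|h|$). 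Since $|h|\le\frac{\delta_0}{2}$, the points $t,t+h$ lie on a common Jordan curve $\Gamma_j$, and I would split $\partial D=L\cup L^c$, where $L\subset\Gamma_j$ is the sub-arc centered at $t$ with $\{|\zeta-t|\le 2|h|\}$, so that $t+h\in L$. The hypotheses $|h|\le\frac{s_0}{6c_0}$ and $|h|\le\frac{h_0}{3c_0}$, together with the chord-arc inequality (\ref{arc}), guarantee that $L$ is a proper sub-arc and that every arc-length entering below stays $\le h_0$, so Lemma \ref{elem} applies.

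On the near arc $L$ I would bound the two integrals separately. Because $\alpha>0$ the integrands are absolutely integrable after subtracting the base value (no principal value is needed), and Log-H\"older continuity gives, via the chord-arc comparison $|\zeta-t|\asymp s$,
$$\Bigl|\int_L\frac{f(\zeta)-f(t)}{\zeta-t}\,d\zeta\Bigr|\le H^{\nu}[f]\int_L|\zeta-t|^{\alpha-1}|\ln|\zeta-t||^{\nu}\,|d\zeta|\le CH^{\nu}[f]\int_0^{C|h|}s^{\alpha-1}|\ln s|^{\nu}\,ds,$$
which Lemma \ref{elem} 1) bounds by $CH^{\nu}[f]\,|h|^\alpha|\ln|h||^{\nu}$. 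The integral based at $t+h$ is identical, since $|\zeta-(t+h)|\le 3|h|$ for every $\zeta\in L$. Thus the near contribution obeys the target bound in both cases.

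On the far arc $L^c$ I would use the algebraic identity
$$\frac{f(\zeta)-f(t+h)}{\zeta-(t+h)}-\frac{f(\zeta)-f(t)}{\zeta-t}=\bigl(f(\zeta)-f(t)\bigr)\frac{h}{(\zeta-(t+h))(\zeta-t)}-\frac{f(t+h)-f(t)}{\zeta-(t+h)}.$$
For the first term, on $L^c$ one has $|\zeta-t|\ge 2|h|$ and hence $|\zeta-(t+h)|\ge\tfrac12|\zeta-t|$; splitting off the region $\{|\zeta-t|>h_0\}$ (including the other curves $\Gamma_m$, $m\ne j$), where $|f(\zeta)-f(t)|\le 2\|f\|_{C(D)}$ and the kernel is bounded so the contribution is a harmless $O(|h|)$, the remaining piece is controlled by
$$2|h|\,H^{\nu}[f]\int_{\{2|h|\le|\zeta-t|\le h_0\}}|\zeta-t|^{\alpha-2}|\ln|\zeta-t||^{\nu}\,|d\zeta|\le C|h|\,H^{\nu}[f]\int_{C|h|}^{h_0}s^{\alpha-2}|\ln s|^{\nu}\,ds.$$
Here Lemma \ref{elem} 2) is decisive: for $0<\alpha<1$ it yields $C|h|\cdot|h|^{\alpha-1}|\ln|h||^{\nu}=C|h|^\alpha|\ln|h||^{\nu}$, whereas for $\alpha=1$ it only yields $C|h|\cdot|\ln|h||^{\nu+1}\le C|\ln|h||^{\nu+1}$ --- this is precisely where the extra logarithmic power in the $\alpha=1$ case is born, and is the genuinely interesting feature driving the $+1$ in Theorem \ref{main}.

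The step I expect to be the main obstacle is the remaining far term $-\frac{1}{2\pi i}\bigl(f(t+h)-f(t)\bigr)\int_{L^c}\frac{d\zeta}{\zeta-(t+h)}$. Although $|f(t+h)-f(t)|\le H^{\nu}[f]|h|^\alpha|\ln|h||^{\nu}$ is already of the right size, a crude absolute-value estimate of $\int_{L^c}\frac{d\zeta}{\zeta-(t+h)}$ only gives $C|\ln|h||$, and the resulting extra logarithm would spoil the sharp $0<\alpha<1$ estimate. The point is therefore to exploit cancellation: evaluating the integral over $L^c\cap\Gamma_j$ through the antiderivative $\log(\zeta-(t+h))$, the total change of argument is at most $2\pi$, while the change of $\log|\zeta-(t+h)|$ between the two endpoints of $L$ is bounded because both endpoints sit at distance comparable to $|h|$ from $t+h$; the integrals over the remaining closed curves $\Gamma_m$ equal $0$ or $2\pi i$. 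Hence $\bigl|\int_{L^c}\frac{d\zeta}{\zeta-(t+h)}\bigr|\le C$ uniformly in $h$, and this term too respects $CH^{\nu}[f]|h|^\alpha|\ln|h||^{\nu}$. Collecting the elementary term, the near contribution, and the two far contributions yields the stated inequality, with all constants depending only on $D,\alpha,\nu$ through $c_0,h_0,s_0,\delta_0$ and Lemma \ref{elem}.
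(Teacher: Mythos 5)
Your proof is correct and follows essentially the same route as the paper's: the Sokhotski--Plemelj representation, a near/far splitting of $\partial D$ around $t$, Lemma \ref{elem} part 1) on the near arc and part 2) on the kernel-difference term (with the extra logarithm at $\alpha=1$ arising exactly where you say), and a cancellation argument for the leftover constant term, which the paper writes in the symmetric form $(f(t+h)-f(t))\int_{\Gamma_1\setminus l}\frac{d\zeta}{\zeta-t}$ and whose boundedness it asserts tersely. The only differences are cosmetic: the paper defines the near arc by arclength (total length $4\tilde h$ with $\tilde h=|t+h,t|$) rather than by a Euclidean ball, and its algebraic identity pairs $f(\zeta)-f(t+h)$ (rather than $f(\zeta)-f(t)$) with the kernel difference.
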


\begin{proof}
Assume $t\in \Gamma_1$ without loss of generality. Since  $|t+h-t| =|h|\le \frac{\delta_0}{2}$, $ t+h\in \Gamma_1$ as well.    By Sokhotski-Plemelj Formula,
 \begin{equation*}
   \begin{split}
    \Phi f(t+h )-\Phi f(t ) = & \frac{1}{2\pi i}\int_{\partial D}\frac{f(\zeta )-f(t+h )}{\zeta-t-h}d\zeta-\frac{1}{2\pi i}\int_{\partial D}\frac{f(\zeta )-f(t )}{\zeta-t}d\zeta + (f(t+h )-f(t ))\\
    =& \frac{1}{2\pi i}(\int_{\cup_{j=1}^N\Gamma_j}\frac{f(\zeta )-f(t+h )}{\zeta-t-h}d\zeta-\int_{\cup_{j=1}^N\Gamma_j}\frac{f(\zeta )-f(t )}{\zeta-t}d\zeta) + (f(t+h )-f(t )).
   \end{split}
 \end{equation*}

  Because $\cup_{j=2}^N \Gamma_j$ does not intersect with $\Gamma_1$ and $t, t+h\in \Gamma_1$, we have $|\zeta-t|\ge C$ and  $|\zeta-t-h|\ge C$ on $\cup_{j=2}^N \Gamma_j$ for some positive $C$ dependent only on $\partial D$. %On the other hand, the absolute value of the numerator in  $I_3$ is less than $Ch^\alpha \|\tilde f\|_{C^{\alpha}(\Omega)}$.
  It immediately follows that
  \begin{equation*}
  \begin{split}
         & \left |\int_{\cup_{j=2}^N\Gamma_j}\frac{f(\zeta )-f(t+h )}{\zeta-t-h}d\zeta-\int_{\cup_{j=2}^N\Gamma_j}\frac{f(\zeta )-f(t )}{\zeta-t}d\zeta\right |\\
          =& \left |\int_{\cup_{j=2}^N \Gamma_j}\frac{(f(\zeta)-f(t))h + (f(t)-f(t+h))(\zeta-t)}{(\zeta-t-h)(\zeta-t)}d\zeta \right |\\
          \le&  \int_{\cup_{j=2}^N\Gamma_j}C\|f\|_{C^{L^\alpha Log^\nu L}(D)}|h|^\alpha|\ln |h||^\nu  |d\zeta|\\
          \le & C\|f\|_{C^{L^\alpha Log^\nu L}(D)}| h|^\alpha|\ln|  h||^{\nu}.
     \end{split}
  \end{equation*}
  It thus suffices to show, in view of the chord-arc condition,  for  $t, t+h\in \Gamma_1$ with $\tilde h: =|t+h, t|\le \min\{\frac{h_0}{3}, \frac{s_0}{6}\}$,
\begin{equation*}
  \left |\int_{\Gamma_1}\frac{f(\zeta )-f(t+h )}{\zeta-t-h}d\zeta-\int_{\Gamma_1}\frac{f(\zeta )-f(t )}{\zeta-t}d\zeta \right | \le \left\{
      \begin{array}{cc}
     C\|f\|_{C^{L^\alpha Log^\nu L}(D)}\tilde h^\alpha|\ln \tilde h|^{\nu}, & 0<\alpha<1; \\
     C\|f\|_{C^{L^1 Log^\nu L}(D)}\tilde h|\ln \tilde h|^{\nu+1}, & \alpha =1.
    \end{array}
\right.
\end{equation*}

Due to the  $C^{1, \alpha}$ boundary of $\Gamma_1$, $|d\zeta|\approx |ds|$. Denote  by s the arclength parameter of $\Gamma_1$ with $\zeta|_{s=0}= t$, and by $l$   the arc on $\Gamma_1$ centered at $t$  of total arclength $4\tilde h$. Recall that $s_1$ is the total arclength of $\Gamma_1$. The chord-arc condition implies $|\zeta-t|\approx |\zeta, t|= \min\{s, s_1-s\}$ on $\Gamma_1$.

On $l$,  notice that $$|\zeta-t-h|\ge C|\zeta, t+h|\ge C||\zeta, t|-|t+h, t|| = \left\{
      \begin{array}{cc}
     C|s-\tilde h|, & s\le \frac{s_1}{2}; \\
    C|s_1-s-\tilde h|, & s\ge \frac{s_1}{2}.
    \end{array}
\right.$$    Together with the fact that  $|f(\zeta )-f(t+h )|\le \|f\|_{C^{L^\alpha Log^\nu L}(D)}|\zeta-t-h|^\alpha |\ln|\zeta-t-h||^{\nu}$ and $|f(\zeta )-f(t )|\le \|f\|_{C^{L^\alpha Log^\nu L}(D)}|\zeta-t|^\alpha |\ln|\zeta-t||^{\nu}$ on $l$, one obtains from Lemma \ref{elem},
\begin{equation*}
 \begin{split}
    &\left |\int_{l}\frac{f(\zeta )-f(t+h )}{\zeta-t-h}d\zeta-\int_{l}\frac{f(\zeta )-f(t )}{\zeta-t}d\zeta \right |\\
    \le &C\|f\|_{C^{L^\alpha Log^\nu L}(D)}(\int_{l}|\zeta-t-h|^{\alpha-1} |\ln|\zeta-t-h||^{\nu} |d\zeta| + \int_{l}|\zeta-t|^{\alpha-1} |\ln|\zeta-t||^{\nu} |d\zeta|)\\
    \le &C\|f\|_{C^{L^\alpha Log^\nu L}(D)}(\int_{0}^{2\tilde h}|s-\tilde h|^{\alpha-1} |\ln |s-\tilde h||^{\nu} ds + \int_{0}^{2\tilde h}|s|^{\alpha-1} |\ln s|^{\nu} ds)\\
     \le &C\|f\|_{C^{L^\alpha Log^\nu L}(D)}(\int_{0}^{3\tilde h}s^{\alpha-1} |\ln s|^{\nu} ds + \int_{0}^{2\tilde h}|s|^{\alpha-1} |\ln s|^{\nu} ds)\\
    \le &C\|f\|_{C^{L^\alpha Log^\nu L}(D)}\tilde h^\alpha|\ln\tilde h|^{\nu}.
 \end{split}
\end{equation*}

Next we estimate
\begin{equation*}
\begin{split}
  &\left |\int_{\Gamma_1\setminus l}\frac{f(\zeta )-f(t+h )}{\zeta-t-h}d\zeta-\int_{\Gamma_1\setminus l}\frac{f(\zeta )-f(t )}{\zeta-t}d\zeta \right |\\
  \le & \left |\int_{\Gamma_1 \setminus l}(f(\zeta )-f(t+h ))(\frac{1}{\zeta-t-h}-   \frac{1}{\zeta-t})d\zeta \right |+\left |\int_{\Gamma_1\setminus l}\frac{f(t+h )-f(t )}{\zeta-t}d\zeta\right|  =: I+II.
  \end{split}
\end{equation*}
Since $II = |f(t+h )-f(t )||\frac{1}{2\pi i}\int_{\Gamma_1-l}\frac{1}{\zeta-t}d\zeta|\le C|f(t+h )-f(t )|$, $II$ is  bounded by $C\|f\|_{C^{L^\alpha Log^\nu L}(D)} \tilde h^\alpha|\ln \tilde h|^{\nu}$. Now we treat  $I =  |\frac{h}{2\pi }\int_{\Gamma_1 \setminus l}\frac{f(\zeta )-f(t+h )}{(\zeta-t-h)(\zeta-t)}d\zeta|$. Due to the chord-arc condition,   $|\zeta, t+h|\ge |\zeta, t|-|t, t+h|= \min \{s -\tilde h, s_1-s-\tilde h\}\ge \tilde h $ on $\Gamma_1\setminus l$. Hence $$|\zeta-t|\le |\zeta, t|\le |\zeta, t+h|+|t+h, t| =|\zeta, t+h|+ \tilde h\le  2 |\zeta, t+h| \le C|\zeta-t-h|,$$ or equivalently, $$|\zeta-t-h|>C |\zeta -t|\approx \min\{s, s_1-s\} $$ on $\Gamma_1\setminus l$. Let $l'$ be the arc on $\Gamma_1$ centered at $t$ with total arclength $\min\{2h_0, s_1\}$ so $l\subset l'\subset \Gamma_1$. Therefore
\begin{equation*}
  \begin{split}
    I \le & C\|f\|_{C^{L^\alpha Log^\nu L}(D)}\tilde h\int_{l'\setminus l}\frac{|\zeta-t-h|^{\alpha-1} |\ln|\zeta-t-h||^{\nu}}{|\zeta-t|}|d\zeta| +\\
    &+C\|f\|_{C(D)}\tilde h\int_{\Gamma_1\setminus l'}\frac{1}{|\zeta-t-h||\zeta-t|}|d\zeta| \\
    \le& C\|f\|_{C^{L^\alpha Log^\nu L}(D)}\tilde h\int_{2\tilde h}^{\min\{h_0, \frac{s_1}{2}\}} s^{\alpha-2}|\ln s|^\nu ds +C\|f\|_{C(D)}\tilde h\int_{\min\{h_0, \frac{s_1}{2}\}}^{\frac{s_1}{2}}\frac{1}{s^2}ds\\
    \le&C\|f\|_{C^{L^\alpha Log^\nu L}(D)}\tilde h(\int_{2\tilde h}^{h_0} s^{\alpha-2}|\ln s|^\nu ds +1).
          \end{split}
\end{equation*}
It follows immediately from Lemma \ref{elem},
\begin{equation*}
\begin{split}
  I \le  \left\{
      \begin{array}{cc}
     C\|f\|_{C^{L^\alpha Log^\nu L}(D)}\tilde h^\alpha|\ln\tilde  h|^{\nu}, & 0<\alpha<1; \\
     C\|f\|_{C^{L^1 Log^\nu L}(D)}\tilde h |\ln \tilde h|^{\nu+1}, & \alpha =1.
    \end{array}
\right.
  \end{split}
\end{equation*}
\end{proof}

For H\"older semi-norm of $S$ at interior points of the domain, classical singular integral operators theory utilizes a generalized version of the Maximum Modulus Theorem of holomorphic functions to a branch of  $\frac{Sf(z)-Sf(z')}{(z-z')^\alpha}$ to achieve the boundedness. %Unfortunately such an approach fails for Log-H\"older spaces   due to the presence of the additional '$\log$' term in the denominator ($|\log (z-z')| \approx |\ln |z-z|'| + |\arg(z-z')|$ is not comparable to $|\ln |z-z'||$).
 We adopt here a different approach introduced in \cite{Mclean}.

Given $t\in\partial D$,  define $\mathcal N(t)$, a nontangential approach region (cf. \cite{Kenig} \cite{Mclean})  as follows.
$$\mathcal N(t)=\{z\in D: |z-t|\le \min \{4 \text{dist}(z, \partial D), \frac{\delta_0}{4}\}\}.$$
If $z\in \mathcal N(t)$, then   $|\zeta-z|\ge \text{dist}(z, \partial D) \ge \frac{1}{4}|z-t|$ for all  $\zeta\in \partial D$. Hence $|\zeta-z|\ge  \frac{1}{4}(|\zeta-t|- |\zeta-z|),$ implying  $|\zeta-z|\ge \frac{1}{5}|\zeta-t|$ on $\partial D$. Altogether, for $z\in \mathcal N(t)$ and $\zeta\in \partial D$,
\begin{equation}\label{444}
   |\zeta-z|\ge\max\{\frac{1}{4}|z-t|, \frac{1}{5}|\zeta-t|\}.
\end{equation}

\begin{lem}\label{bound}  Let $0< \alpha\le 1$. If $f\in C^{L^\alpha Log^\nu L}(D)$ and $t\in\partial D$, then for $z\in \mathcal N(t)$ with $|z-t|\le \min\{h_0, \frac{s_0}{2}\}$,
\begin{equation*}
|Sf(z)-\Phi f(t)|\le \left\{
      \begin{array}{cc}
     C\|f\|_{C^{L^\alpha Log^\nu L}(D)}|z-t|^\alpha|\ln| z-t||^{\nu}, & 0<\alpha<1; \\
     C\|f\|_{C^{L^1 Log^\nu L}(D)}|z-t||\ln| z-t||^{\nu+1}, & \alpha =1
    \end{array}
\right. \end{equation*}
for a constant $C$ dependent only on $D, \alpha$ and $\nu$.
\end{lem}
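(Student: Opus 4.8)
\textbf{Proof strategy for Lemma \ref{bound}.}
The plan is to estimate $Sf(z) - \Phi f(t)$ for $z \in \mathcal N(t)$ by writing everything as a contour integral and splitting $\partial D$ into a local piece near $t$ and a distant piece, in direct analogy with the proof of Lemma \ref{int}. Using the identity $\frac{1}{2\pi i}\int_{\partial D}\frac{1}{\zeta - z}d\zeta = 1$ for the interior point $z \in D$, I can write
\begin{equation*}
Sf(z) = \frac{1}{2\pi i}\int_{\partial D}\frac{f(\zeta) - f(t)}{\zeta - z}d\zeta + f(t),
\end{equation*}
and since $\Phi f(t) = \frac{1}{2\pi i}\int_{\partial D}\frac{f(\zeta) - f(t)}{\zeta - t}d\zeta + f(t)$, subtraction eliminates the $f(t)$ terms and yields
\begin{equation*}
Sf(z) - \Phi f(t) = \frac{1}{2\pi i}\int_{\partial D}(f(\zeta) - f(t))\left(\frac{1}{\zeta - z} - \frac{1}{\zeta - t}\right)d\zeta = \frac{z - t}{2\pi i}\int_{\partial D}\frac{f(\zeta) - f(t)}{(\zeta - z)(\zeta - t)}d\zeta.
\end{equation*}
This reduces matters to bounding a single integral, and the factor $z - t$ out front is exactly what will turn the integral's logarithmic divergence into the desired $|z-t|^\alpha$ (or $|z-t||\ln|z-t||^{\nu+1}$) growth.

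Next I would decompose the contour. First, the far components $\cup_{j=2}^N \Gamma_j$ contribute a harmless bounded term: there both $|\zeta - z|$ and $|\zeta - t|$ are bounded below by a constant depending only on $\partial D$ (using $|z - t| \le \frac{\delta_0}{4}$ from the definition of $\mathcal N(t)$), so that piece is $\le C\|f\|_{C^{L^\alpha Log^\nu L}(D)}|z-t|$, which is dominated by the claimed bound. On the component $\Gamma_1$ containing $t$, I would split into the short arc $l$ centered at $t$ of arclength comparable to $|z - t|$ and its complement $\Gamma_1 \setminus l$, parametrizing by arclength $s$ so that $|\zeta - t| \approx \min\{s, s_1 - s\}$ via the chord-arc condition. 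The key geometric input is the estimate \eqref{444}, which gives $|\zeta - z| \ge \frac{1}{5}|\zeta - t|$ for all $\zeta \in \partial D$ and $|\zeta - z| \ge \frac14|z-t|$; combined with $|f(\zeta) - f(t)| \le \|f\|_{C^{L^\alpha Log^\nu L}(D)}|\zeta - t|^\alpha|\ln|\zeta - t||^\nu$, this controls the integrand by $|\zeta-t|^{\alpha-2}|\ln|\zeta-t||^\nu$ on $\Gamma_1\setminus l$ and by $|\zeta-t|^{\alpha-1}|\ln|\zeta-t||^\nu \cdot |z-t|^{-1}$ on $l$.

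On the near arc $l$, the factor $|z - t|$ out front cancels against the lower bound $|\zeta - z| \ge \frac14|z-t|$, leaving an integral of $|\zeta - t|^{\alpha - 1}|\ln|\zeta - t||^\nu$ over an arc of length $\approx |z-t|$, which Lemma \ref{elem}(1) bounds by $C|z-t|^\alpha|\ln|z-t||^\nu$. On the far arc $\Gamma_1 \setminus l$, after factoring out $|z-t|$ I am left with $\int_{|z-t|}^{h_0} s^{\alpha - 2}|\ln s|^\nu\, ds$ (plus a trivially bounded tail beyond $h_0$), and Lemma \ref{elem}(2) supplies precisely the case split: this integral is $\le C|z-t|^{\alpha-1}|\ln|z-t||^\nu$ when $0 < \alpha < 1$, giving after multiplication by $|z-t|$ the bound $C|z-t|^\alpha|\ln|z-t||^\nu$, while for $\alpha = 1$ it is $\le C|\ln|z-t||^{\nu+1}$, giving $C|z-t||\ln|z-t||^{\nu+1}$. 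I expect the main obstacle to be bookkeeping the constants and radii carefully enough that the arclength parametrization, the chord-arc comparisons \eqref{arc}, and the hypotheses $|z-t| \le \min\{h_0, \frac{s_0}{2}\}$ all line up so that the integration intervals stay within the range where Lemma \ref{elem} applies; the analytic content is entirely packaged into that lemma, so the difficulty is geometric rather than estimative, and it is essentially the same arc-splitting already carried out in Lemma \ref{int} with $z \in \mathcal N(t)$ playing the role previously played by the boundary point $t + h$.
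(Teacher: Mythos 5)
Your proposal is correct and follows essentially the same route as the paper's proof: the same algebraic identity $Sf(z)-\Phi f(t)=\frac{z-t}{2\pi i}\int_{\partial D}\frac{f(\zeta)-f(t)}{(\zeta-z)(\zeta-t)}d\zeta$ via Cauchy's formula, the same three-way contour split into the near arc $l$, the rest of $\Gamma_1$, and the far components $\cup_{j=2}^N\Gamma_j$, the same use of the nontangential estimate (\ref{444}) to cancel the prefactor on $l$ and to reduce the complement to $\int_{|z-t|}^{h_0}s^{\alpha-2}|\ln s|^{\nu}ds$, and the same appeal to Lemma \ref{elem} for the $\alpha<1$ versus $\alpha=1$ case split. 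No gaps to report.
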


\begin{proof}
 Without loss of generality, assume $t\in \Gamma_1$. By Cauchy's integral formula, $\frac{1}{2\pi i}\int_{\partial D}\frac{1}{\zeta-z}d\zeta = 1$ when $z\in D$. Hence
  \begin{equation*}
      \begin{split}
   Sf(z)-\Phi f(t)  = & (\frac{1}{2\pi i}\int_{\partial D}\frac{f(\zeta)-f(t)}{\zeta-z}d\zeta + f(t)) - (\frac{1}{2\pi i}\int_{\partial D}\frac{f(\zeta)-f(t)}{\zeta-t}d\zeta + f(t))\\
   =& \frac{z-t}{2\pi i}\int_{\partial D}\frac{f(\zeta)-f(t)}{(\zeta-z)(\zeta-t)}d\zeta\\
   =&\frac{z-t}{2\pi i}\int_{l}\frac{f(\zeta)-f(t)}{(\zeta-z)(\zeta-t)}d\zeta +  \frac{z-t}{2\pi i}\int_{\Gamma_1\setminus l}\frac{f(\zeta)-f(t)}{(\zeta-z)(\zeta-t)}d\zeta +\\
   &+ \frac{z-t}{2\pi i}\int_{\cup_{j=2}^N \Gamma_j}\frac{f(\zeta)-f(t)}{(\zeta-z)(\zeta-t)}d\zeta\\
   =&: I+II+III
   \end{split}
  \end{equation*}
  Here $l$ is the arc on $\Gamma_1$ centered at $t$  of total arclength $2|z-t|=:2|h|$. For  $III$, when $\zeta\in \cup_{j=2}^N \Gamma_j$, $|\zeta-t|\ge \delta_0$, and $|\zeta-z|\ge |\zeta-t|-|t-z|\ge \delta_0-\frac{\delta_0}{4} = \frac{3\delta_0}{4}$. We thus deduce
  \begin{equation*}
      |III|\le C|h|\|f\|_{C(D)}\le C\|f\|_{C^{L^\alpha Log^\nu L}(D)}|h|^\alpha|\ln| h||^{\nu}.
  \end{equation*}
  Next we estimate $I$ and $II$. It follows from (\ref{444}) and Lemma \ref{elem}  that
  \begin{equation*}
      \begin{split}
          |I|\le& C|h|\|f\|_{C^{L^\alpha Log^\nu L}(D)} \int_l\frac{|\zeta-t|^{\alpha-1}|\ln|\zeta-t||^{\nu}}{|\zeta-z|}|d\zeta|\\
          \le& C|h|\|f\|_{C^{L^\alpha Log^\nu L}(D)} \int_l\frac{|\zeta-t|^{\alpha-1}|\ln|\zeta-t||^{\nu}}{|z-t|}|d\zeta|\\
          \le & C\|f\|_{C^{L^\alpha Log^\nu L}(D)} \int_{0}^{|h|}s^{\alpha-1}|\ln s|^{\nu} ds\\
          \le &C\|f\|_{C^{L^\alpha Log^\nu L}(D)}|h|^\alpha|\ln |h||^{\nu}.
      \end{split}
  \end{equation*}
For $II$, let $l'$ be the arc on $\Gamma_1$ centered at $t$  of arclength  $\min\{2h_0, s_1\}$ as in the previous lemma.
  \begin{equation*}
      \begin{split}
      |II|\le & C|h|\|f\|_{C^{L^\alpha Log^\nu L}(D)} \int_{l'\setminus l}\frac{|\zeta-t|^{\alpha}|\ln|\zeta-t||^{\nu}}{|\zeta-t|^2}|d\zeta| + C|h|\|f\|_{C(D)}\int_{\Gamma_1\setminus l'}\frac{1}{|\zeta-t|^{2}}|d\zeta|)\\
      \le & C|h|\|f\|_{C^{ L^\alpha Log^\nu L}(D)} (\int_{|h|}^{h_0} s^{\alpha-2}|\ln s||^{\nu}ds +\int_{\min\{h_0, \frac{s_1}{2}\}}^{\frac{s_1}{2}} \frac{1}{s^2}ds )\\
      \le & \left\{
      \begin{array}{cc}
     C\|f\|_{C^{L^\alpha Log^\nu L}(D)}|h|^\alpha|\ln| h||^{\nu}, & 0<\alpha<1; \\
     C\|f\|_{C^{L^1 Log^\nu L}(D)}|h||\ln| h||^{\nu+1}, & \alpha =1.
    \end{array}
\right.
  \end{split}
      \end{equation*}
\end{proof}

\begin{lem}\label{int1}
Let $0< \alpha\le 1$. If $f\in C^{L^\alpha Log^\nu L}(D)$ and $t\in \partial D$, then for $z, z+h\in \mathcal N(t)$ with %$|z-t|\le \frac{\delta_0}{4}$ and
$|h|\le \min\{h_0, \frac{\delta_0}{4}, \frac{s_0}{2}\}$,
\begin{equation*}
|Sf(z+h)-Sf(z)|\le \left\{
      \begin{array}{cc}
     C\|f\|_{C^{L^\alpha Log^\nu L}(D)}|h|^\alpha|\ln| h||^{\nu}, & 0<\alpha<1; \\
     C\|f\|_{C^{L^1 Log^\nu L}(D)}|h||\ln| h||^{\nu+1}, & \alpha =1
    \end{array}
\right.
\end{equation*}
for a constant $C$ dependent only on $D, \alpha$ and $\nu$.
\end{lem}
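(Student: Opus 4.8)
The plan is to reduce the estimate to two results already established—Lemma \ref{bound} (interior value versus boundary value) and the one-dimensional integral bounds of Lemma \ref{elem}—by splitting into two regimes according to the size of $d:=|z-t|$ relative to $|h|$. Write $g(s):=s^\alpha|\ln s|^\nu$ when $0<\alpha<1$ and $g(s):=s|\ln s|^{\nu+1}$ when $\alpha=1$, so that the asserted bound is $|Sf(z+h)-Sf(z)|\le C\|f\|_{C^{L^\alpha Log^\nu L}(D)}\,g(|h|)$. The computational starting point for the second regime is the identity obtained by subtracting the constant $f(t)$ (legitimate since $\frac{1}{2\pi i}\int_{\partial D}\frac{d\zeta}{\zeta-w}=1$ for $w\in D$) and combining the two fractions:
$$Sf(z+h)-Sf(z)=\frac{h}{2\pi i}\int_{\partial D}\frac{f(\zeta)-f(t)}{(\zeta-z-h)(\zeta-z)}\,d\zeta.$$

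In the close regime $d\le 2|h|$ I avoid the identity and instead go through the boundary value: since $d\le 2|h|$ forces $|z-t|,|z+h-t|\le 3|h|$, the triangle inequality
$$|Sf(z+h)-Sf(z)|\le|Sf(z+h)-\Phi f(t)|+|Sf(z)-\Phi f(t)|$$
together with Lemma \ref{bound} bounds the right-hand side by $C\|f\|_{C^{L^\alpha Log^\nu L}(D)}\,g(3|h|)$. Since $g$ is increasing near $0$ and satisfies the doubling estimate $g(3s)\le Cg(s)$, this gives the desired bound. (The hypothesis $|h|\le\min\{h_0,\delta_0/4,s_0/2\}$ ensures, up to a fixed factor, that Lemma \ref{bound} applies at both $z$ and $z+h$.)

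The substance is the far regime $d>2|h|$, treated via the identity above. Both $z,z+h\in\mathcal N(t)$, so (\ref{444}) gives $|\zeta-z|,|\zeta-z-h|\ge\frac14 d$ and $\ge\frac15|\zeta-t|$ on $\partial D$, while $|z+h-t|\ge d/2$. I split $\partial D=l\cup(\Gamma_1\setminus l)\cup\bigcup_{j\ge2}\Gamma_j$, where $l\subset\Gamma_1$ is the arc centered at $t$ of arclength comparable to $d$ (so, by the chord-arc condition, $l\approx\{|\zeta-t|\lesssim d\}$). On $l$ I bound the product of denominators below by $cd^2$ and use $|f(\zeta)-f(t)|\le\|f\|_{C^{L^\alpha Log^\nu L}(D)}|\zeta-t|^\alpha|\ln|\zeta-t||^\nu$ together with the elementary estimate $\int_0^{Cd}s^\alpha|\ln s|^\nu\,ds\le Cd^{\alpha+1}|\ln d|^\nu$, which yields a contribution $\le C\|f\|_{C^{L^\alpha Log^\nu L}(D)}|h|\,d^{\alpha-1}|\ln d|^\nu$ (with $d^{\alpha-1}$ read as $1$ when $\alpha=1$). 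On $\Gamma_1\setminus l$ I bound the denominators below by $c|\zeta-t|^2$ and apply Lemma \ref{elem}(2) to $\int_{Cd}^{h_0}s^{\alpha-2}|\ln s|^\nu\,ds$, splitting off the region $|\zeta-t|>h_0$ on which only the boundedness of $f$ is used; this contributes $\le C\|f\|_{C^{L^\alpha Log^\nu L}(D)}|h|(d^{\alpha-1}|\ln d|^\nu+1)$ when $0<\alpha<1$ and $\le C\|f\|_{C^{L^1Log^\nu L}(D)}|h|(|\ln d|^{\nu+1}+1)$ when $\alpha=1$. The components $\Gamma_j$, $j\ge2$, have integrand bounded by $C\|f\|_{C(D)}$ and contribute $\le C\|f\|_{C(D)}|h|$.

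It remains to convert the powers and logarithms of $d$ into those of $|h|$, using $|h|<d$ and $|h|\le h_0$. For $0<\alpha<1$ one writes $|h|\,d^{\alpha-1}=|h|^\alpha(|h|/d)^{1-\alpha}\le|h|^\alpha$ and $|\ln d|\le|\ln|h||$, while the stray additive term is absorbed via $|h|\le Cg(|h|)$; for $\alpha=1$ one uses $|\ln d|^{\nu+1}\le|\ln|h||^{\nu+1}$ directly. I expect this bookkeeping of the logarithmic factors in the far regime to be the main obstacle—establishing $|h|\,d^{\alpha-1}|\ln d|^\nu\le Cg(|h|)$ uniformly for $2|h|<d\le\mathrm{diam}\,D$ and for both signs of $\nu$ is exactly what the thresholds built into Lemma \ref{elem} and the choice of $h_0$ are designed to guarantee. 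The endpoint $\alpha=1$, where the borderline integral $\int s^{-1}|\ln s|^\nu\,ds$ produces an extra logarithm, is what forces the exponent $\nu+1$ in the target space.
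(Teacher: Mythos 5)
Your proposal is essentially sound but takes a genuinely different route from the paper's. The paper avoids your two-regime split entirely: after writing $Sf(z+h)-Sf(z)=\frac{h}{2\pi i}\int_{\partial D}\frac{f(\zeta)-f(t)}{(\zeta-z-h)(\zeta-z)}\,d\zeta$, it assumes without loss of generality that $|z-t|\ge|z+h-t|$, so that $|z-t|\ge\frac{1}{2}(|z-t|+|z+h-t|)\ge\frac{|h|}{2}$; combined with (\ref{444}) this gives $|\zeta-z|\ge C\max\{|h|,|\zeta-t|\}$ and $|\zeta-z-h|\ge C|\zeta-t|$ on all of $\partial D$, so a single arc $l$ of arclength $2|h|$ centered at $t$ (rather than your $d$-sized arc) suffices: the near part is controlled by $\int_0^{|h|}s^{\alpha-1}|\ln s|^{\nu}\,ds$ and the far part by $|h|\int_{|h|}^{h_0}s^{\alpha-2}|\ln s|^{\nu}\,ds$ via Lemma \ref{elem}, and every estimate lands directly in terms of $|h|$ --- no $d$-to-$|h|$ conversion is ever needed. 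Your version is more modular (the close regime recycles Lemma \ref{bound} through $\Phi f(t)$, much as the paper itself does later when proving Proposition \ref{S1}), at the price of introducing the conversion step; the paper's trick is shorter and uniform in the position of $z$.

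One step of yours needs repair as written: in the far regime you convert via the two separate inequalities $|h|\,d^{\alpha-1}\le|h|^\alpha$ and $|\ln d|\le|\ln|h||$. The second gives $|\ln d|^{\nu}\le|\ln|h||^{\nu}$ only when $\nu\ge 0$; for $\nu<0$ it reverses. The correct statement is the joint monotonicity: $\phi(s)=s^{\alpha-1}|\ln s|^{\nu}$ is non-increasing on $(0,h_0]$, since $(\ln\phi)'(s)=\frac{1}{s}\left(\alpha-1-\frac{\nu}{|\ln s|}\right)\le 0$ there --- trivially for $\nu\ge 0$, and for $\nu<0$ precisely because $h_0\le e^{2\nu/(1-\alpha)}$ forces $\frac{|\nu|}{|\ln s|}\le\frac{1-\alpha}{2}$. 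Hence $|h|\,\phi(d)\le|h|\,\phi(|h|)=|h|^\alpha|\ln|h||^{\nu}$ whenever $|h|<d\le h_0$, which is the uniform bound you anticipated the thresholds would deliver. You must also cap your arc $l$ at radius comparable to $h_0$ when $d>h_0$: the semi-norm bound on $|f(\zeta)-f(t)|$ and the integral estimate $\int_0^{Cd}s^{\alpha}|\ln s|^{\nu}ds\le Cd^{\alpha+1}|\ln d|^{\nu}$ are only valid for $|\zeta-t|$ small (for $\nu<0$ the integrand even blows up at $s=1$), so the portion of $l$ with $|\zeta-t|>h_0$ must be handled by boundedness of $f$, exactly as you already do on $\Gamma_1\setminus l$. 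With these amendments, plus the harmless shrinking of the admissible range of $|h|$ by a fixed factor so that Lemma \ref{bound} genuinely applies at both $z$ and $z+h$ in the close regime (the excluded range $|h|\approx h_0$ being trivial since $\|Sf\|_{C(D)}\le C\|f\|_{C^{L^\alpha Log^\nu L}(D)}$ by classical theory), your argument goes through, including at $\alpha=1$ where the standing assumption $\nu\ge0$ makes your factor-wise conversion legitimate.
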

\begin{proof}Without loss of generality, assume $t\in \Gamma_1$. Since $z, z+h\in D$, by Cauchy integral formula, we have
\begin{equation*}
\begin{split}
Sf(z+h)-Sf(z) =& \frac{1}{2\pi i}\int_{\partial D}\frac{f(\zeta)-f(t)}{\zeta-z-h} -\frac{f(\zeta)-f(t)}{\zeta-z} d\zeta +\\
&+ \frac{f(t)}{2\pi i}(\int_{\partial D}\frac{1}{\zeta-z-h}d\zeta -\int_{\partial D} \frac{1}{\zeta-z} d\zeta)\\
=& \frac{1}{2\pi i}\int_{\partial D}\frac{f(\zeta)-f(t)}{\zeta-z-h} -\frac{f(\zeta)-f(t)}{\zeta-z} d\zeta\\
=& \frac{h}{2\pi i}\int_{\partial D}\frac{f(\zeta)-f(t)}{(\zeta-z-h)(\zeta-z)} d\zeta\\
=& \frac{h}{2\pi i}\int_{l}\frac{f(\zeta)-f(t)}{(\zeta-z-h)(\zeta-z)} d\zeta +  \frac{h}{2\pi i}\int_{\Gamma_1\setminus l}\frac{f(\zeta)-f(t)}{(\zeta-z-h)(\zeta-z)} d\zeta +\\
&+\frac{h}{2\pi i}\int_{\cup_{j=2}^N\Gamma_j}\frac{f(\zeta)-f(t)}{(\zeta-z-h)(\zeta-z)} d\zeta\\
=&: I+II+III.
\end{split}
\end{equation*}
Here $l$ is the arc on $\Gamma_1$ centered at $t$  of total  arclength $2|h|$. Note when $\zeta\in \cup_{j=2}^N\Gamma_j$, $|\zeta-z|\ge |\zeta-t| -|t-z| \ge \frac{3\delta_0}{4}$ and $|\zeta-z-h|\ge |\zeta-t| -|t-z| -|h| \ge \frac{\delta_0}{2}$. As in the proof of Lemma \ref{bound}, we immediately obtain
\begin{equation*}
    |III|\le C|h|\|f\|_{C(D)}\le C\|f\|_{C^{L^\alpha Log^\nu L}(D)}|h|^\alpha|\ln| h||^{\nu}.
\end{equation*}

For the remaining two terms $I$ and $II$, without loss of generality assume $|z-t|\ge |z+h-t|$. Then \begin{equation*}\label{333}
    |z-t|\ge \frac{1}{2}(|z-t|+|z+h-t|)\ge \frac{|h|}{2}.
\end{equation*}
Together with   (\ref{444}), we have \begin{equation}\label{111}
   |\zeta-z|\ge\max\{C|z-t|, C|\zeta-t|\} \ge \max\{C|h|, C|\zeta-t|\}.
\end{equation}
Recalling
\begin{equation*}\label{222}
    |\zeta-z-h|\ge \max\{C|z+h-t|, C|\zeta-t|\}\ge C|\zeta-t|, \end{equation*}
and combining it with (\ref{111}) and Lemma  \ref{elem}, one obtains
\begin{equation*}
\begin{split}
|I|%\le& C\|f\|_{C^{L^\alpha Log^\nu L}(D)}\int_l\frac{|\zeta-t|^\alpha|\ln|\zeta-t||^{\nu}}{|\zeta-t|}|d\zeta|\\
\le & C\|f\|_{C^{L^\alpha Log^\nu L}(D)}\int_l|\zeta-t|^{\alpha-1}|\ln|\zeta-t||^{\nu}|d\zeta|\\
\le & C\|f\|_{C^{L^\alpha Log^\nu L}(D)}\int_{0}^{|h|}s^{\alpha-1}|\ln s|^{\nu} ds\\
\le & C\|f\|_{C^{L^\alpha Log^\nu L}(D)} |h|^\alpha|\ln |h||^{\nu}.
\end{split}
\end{equation*}
Denote by $l'$ the arc on $\Gamma_1$ centered at $t$  of total arclength $\min\{2h_0, s_1\}$. Then
\begin{equation*}
\begin{split}
|II|\le &    C|h|\|f\|_{C^{L^\alpha Log^\nu L}(D)}\int_{l'\setminus l}\frac{|\zeta-t|^\alpha|\ln|\zeta-t||^{\nu}}{|\zeta-t|^2}|d\zeta| + C|h|\|f\|_{C(D)}\int_{\Gamma_1\setminus l'} \frac{1}{|\zeta-t|^2}|d\zeta|\\
\le & C|h|\|f\|_{C^{L^\alpha Log^\nu L}(D)}(\int_{|h|}^{h_0}s^{\alpha-2}|\ln s|^{\nu} ds+\int_{\min\{h_0, \frac{s_1}{2}\}}^{\frac{s_1}{2}}  \frac{1}{s^2}ds)\\
\le & \left\{
      \begin{array}{cc}
     C\|f\|_{C^{L^\alpha Log^\nu L}(D)}|h|^\alpha|\ln| h||^{\nu}, & 0<\alpha<1; \\
     C\|f\|_{C^{L^1 Log^\nu L}(D)}|h||\ln| h||^{\nu+1}, & \alpha =1.
    \end{array}
\right.
  \end{split}
\end{equation*}
\end{proof}

We now are in a position to estimate the Log-H\"older semi-norm of $Sf$ in $D$.

\begin{prop}\label{S1}Let $0<\alpha\le 1$. If $f\in C^{L^\alpha Log^\nu L}(D)$, then for $z, z+h\in D$ with $|h|\le \min\{\frac{h_0}{9c_0}, \frac{s_0}{18c_0}, \frac{\delta_0}{16}, \frac{e^{-\nu-1}}{3}\}$,
\begin{equation}\label{prop}
|Sf(z+h)- Sf(z)|\le \left\{
      \begin{array}{cc}
     C\|f\|_{C^{L^\alpha Log^\nu L}(D)}|h|^\alpha|\ln| h||^{\nu}, & 0<\alpha<1; \\
     C\|f\|_{C^{L^1 Log^\nu L}(D)}|h||\ln| h||^{\nu+1}, & \alpha =1
    \end{array}
\right.
\end{equation}
for a constant $C$ dependent only on $D, \alpha$ and $\nu$.
\end{prop}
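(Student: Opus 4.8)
The plan is to reduce the estimate for two arbitrary interior points to the three preceding lemmas, which respectively control boundary--boundary differences $\Phi f(t+h)-\Phi f(t)$ (Lemma \ref{int}), interior--boundary differences $Sf(z)-\Phi f(t)$ for $z$ in a nontangential region $\mathcal N(t)$ (Lemma \ref{bound}), and interior--interior differences inside a single $\mathcal N(t)$ (Lemma \ref{int1}). Since the left-hand side of (\ref{prop}) is unchanged under the swap $(z,h)\mapsto(z+h,-h)$ and $|\ln|{-h}||=|\ln|h||$, I may assume $\text{dist}(z,\partial D)\le \text{dist}(z+h,\partial D)$ and set $d:=\text{dist}(z,\partial D)$. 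The argument then splits according to whether $z$ is close to or far from $\partial D$ relative to $|h|$, with threshold $d=3|h|$.

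In the boundary regime $d\le 3|h|$, let $t\in\partial D$ be a nearest point to $z$, so $|z-t|=d\le 3|h|$; the smallness of $|h|$ forces $z\in\mathcal N(t)$. If also $z+h\in\mathcal N(t)$ (which holds in particular whenever $\text{dist}(z+h,\partial D)\ge|h|$), I apply Lemma \ref{int1} directly. Otherwise $\text{dist}(z+h,\partial D)<|h|$; letting $t'$ be a nearest point to $z+h$ one has $z+h\in\mathcal N(t')$ and $|t-t'|\le 5|h|$, and I decompose
\begin{equation*}
Sf(z+h)-Sf(z)=\bigl(Sf(z+h)-\Phi f(t')\bigr)+\bigl(\Phi f(t')-\Phi f(t)\bigr)+\bigl(\Phi f(t)-Sf(z)\bigr),
\end{equation*}
estimating the two outer terms by Lemma \ref{bound} and the middle one by Lemma \ref{int}. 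In each term the relevant displacement ($|z-t|$, $|z+h-t'|$, or $|t-t'|$) is at most a fixed multiple of $|h|$, so the monotonicity near the origin of the weights $s\mapsto s^\alpha|\ln s|^\nu$ (when $0<\alpha<1$) and $s\mapsto s|\ln s|^{\nu+1}$ (when $\alpha=1$, this being the role of the constraint $|h|\le e^{-\nu-1}/3$) replaces each displacement by $|h|$ and yields the asserted bound.

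In the interior regime $d>3|h|$ the segment $[z,z+h]$ lies in $D$ with $\text{dist}(w,\partial D)\ge \tfrac{2}{3}d$ for every $w$ on it, and since $Sf$ is holomorphic there, $|Sf(z+h)-Sf(z)|\le |h|\sup_{w\in[z,z+h]}|(Sf)'(w)|$. Writing $(Sf)'(w)=\frac{1}{2\pi i}\int_{\partial D}\frac{f(\zeta)-f(t_w)}{(\zeta-w)^2}\,d\zeta$ with $t_w$ a nearest boundary point to $w$ (legitimate because $\int_{\partial D}(\zeta-w)^{-2}\,d\zeta=0$), I use $|\zeta-w|\ge C\max\{d,|\zeta-t_w|\}$ together with Lemma \ref{elem} to obtain, when $d\le h_0$,
\begin{equation*}
|(Sf)'(w)|\le C\|f\|_{C^{L^\alpha Log^\nu L}(D)}\,d^{\alpha-1}|\ln d|^\nu\ \ (0<\alpha<1),\qquad |(Sf)'(w)|\le C\|f\|_{C^{L^1 Log^\nu L}(D)}\,|\ln d|^{\nu+1}\ \ (\alpha=1),
\end{equation*}
while for $d>h_0$ the crude bound $|(Sf)'(w)|\le C\|f\|_{C(D)}$ suffices. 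Multiplying by $|h|$ and invoking the monotonicity of $d\mapsto d^{\alpha-1}|\ln d|^\nu$ on $(0,h_0]$ together with $d>3|h|$ converts $|h|\,d^{\alpha-1}|\ln d|^\nu$ into $C|h|^\alpha|\ln|h||^\nu$ (and $|\ln d|^{\nu+1}\le|\ln|h||^{\nu+1}$ handles $\alpha=1$, since $\nu+1>0$).

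I expect the interior regime to be the main obstacle. Its delicate point is that $d\mapsto d^{\alpha-1}|\ln d|^\nu$ is genuinely non-increasing on $(0,h_0]$ only because of the defining inequality $h_0\le e^{2\nu/(1-\alpha)}$ (i.e.\ $h_0\le e^{-2|\nu|/(1-\alpha)}$ when $\nu<0$), which forces $d\le e^{-|\nu|/(1-\alpha)}$ throughout; without it the factor $|\ln d|^\nu$ for $\nu<0$ would spoil the comparison with $|h|^\alpha|\ln|h||^\nu$. Note also that here one cannot connect $z$ and $z+h$ through boundary points, as that would produce $d^\alpha|\ln d|^\nu$ with no compensating factor $|h|$; capturing the gain $|h|$ genuinely requires the holomorphy of $Sf$ and the Cauchy derivative estimate. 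The boundary regime, by contrast, is essentially bookkeeping: one only checks that $|t-t'|$, $|z-t|$ and $|z+h-t'|$ stay within the radii of applicability of Lemmas \ref{int}, \ref{bound} and \ref{int1}, which is exactly what the explicit smallness hypotheses on $|h|$ in the statement provide.
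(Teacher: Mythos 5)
Your proposal is correct, and in the boundary regime it follows essentially the paper's own scheme (the same three lemmas, the same three-term decomposition through $\Phi f(t)$ and $\Phi f(t')$); but your interior regime is a genuinely different argument. The paper never differentiates $Sf$: pairs with both points at distance greater than $\delta_0/16$ from $\partial D$ get the crude kernel bound $|Sf(z+h)-Sf(z)|\le C|h|\|f\|_{C(D)}$, and every remaining pair --- including pairs that are deep in $D$ relative to $|h|$ but within $\delta_0/16$ of the boundary --- is absorbed either into Lemma \ref{int1} (both points in a common $\mathcal N(t)$) or into the decomposition governed by the claim (\ref{bd}). Your Hardy--Littlewood-type estimate $|Sf(z+h)-Sf(z)|\le |h|\sup_{[z,z+h]}|(Sf)'|$, with $|(Sf)'(w)|\le C\|f\|_{C^{L^\alpha Log^\nu L}(D)}d_w^{\alpha-1}|\ln d_w|^{\nu}$ (resp.\ $|\ln d_w|^{\nu+1}$ when $\alpha=1$), is a legitimate substitute: subtracting $f(t_w)$ is justified because $\int_{\partial D}(\zeta-w)^{-2}d\zeta=0$, the kernel bound $|\zeta-w|\ge\max\{d_w,\tfrac12|\zeta-t_w|\}$ holds for a nearest boundary point $t_w$, and the monotonicity of $s^{\alpha-1}|\ln s|^{\nu}$ on $(0,h_0]$ is, as you observe, exactly what the definition of $h_0$ in Lemma \ref{elem} guarantees. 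The paper's route keeps the whole proof resting on a single kernel identity and the three lemmas; your route makes the deep-interior case a transparent gradient estimate that would work verbatim for any operator admitting such interior derivative bounds, at the cost of one extra integral estimate not needed by the paper.

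One quantitative slip should be repaired. You bound $|t-t'|\le 5|h|$, but Lemma \ref{int} is stated only for boundary increments at most $\min\{\frac{h_0}{3c_0},\frac{s_0}{6c_0},\frac{\delta_0}{2}\}$, and the weight $s|\ln s|^{\nu+1}$ used when $\alpha=1$ is increasing only for $s\le e^{-\nu-1}$; with $|h|$ near its allowed maximum, $5|h|$ exceeds both ranges ($5|h|$ can reach $\frac{5h_0}{9c_0}>\frac{h_0}{3c_0}$ and $\frac{5}{3}e^{-\nu-1}>e^{-\nu-1}$). The fix is already contained in your own normalization: in that subcase $\mathrm{dist}(z,\partial D)\le \mathrm{dist}(z+h,\partial D)<|h|$, so $|z-t|<|h|$ and hence $|t-t'|\le 3|h|$, which matches the paper's constant and lies inside the range of applicability of Lemma \ref{int} and of the monotonicity. (Alternatively, on any range where $|h|$ is bounded below by a positive constant the estimate (\ref{prop}) is trivial, so such constant mismatches can always be absorbed by shrinking the threshold on $|h|$.)
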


\begin{proof}
 Let $t, t'\in\partial D$ such that $|z-t|=\text{dist}(z, \partial D)$ and $|z+h-t'| = \text{dist}(z+h, \partial D)$.  Without loss of generality, assume $t\in \Gamma_1$. If both $|z-t|$ and $|z+h-t'|$ are greater than $\frac{\delta_0}{16}$, then $|\zeta-z|\ge |z-t| \ge \frac{\delta_0}{16}$ and $|\zeta-z-h|\ge |t'-z-h|\ge\frac{\delta_0}{16} $ on $\zeta\in \partial D$. Consequently,
 \begin{equation*}
 \begin{split}
     |Sf(z+h)-Sf(z)| =&\left | \frac{h}{2\pi}\int_{\partial D}\frac{f(\zeta)-f(t)}{(\zeta-z-h)(\zeta-z)} d\zeta\right |\\
     \le &C|h|\|f\|_{C(D)}\\
     \le &C\|f\|_{C^{L^\alpha Log^\nu L}(D)}|h|^\alpha|\ln| h||^{\nu}.
     \end{split}
 \end{equation*}

 Otherwise, suppose one of $|z-t|$ and $|z+h-t'|$ is less  than $\frac{\delta_0}{16}$. Say $|z-t|\le \frac{\delta_0}{16}$, implying $|z+h-t'|\le |z+h-t|\le |z-t|+|h|\le \frac{\delta_0}{8}$.
  The other case is done similarly. Hence   $z\in \mathcal N(t)$ and $z+h\in \mathcal N(t')$ by definition.   %Moreover, $|t'-t|\le |t'-z-h|+ |z+h-z|+|z-t|\le \frac{\delta_0}{4}$ and so $t'\in \Gamma_1$ as well.
  Thus if in addition either $z+h\in \mathcal N(t)$ or $z\in \mathcal N(t')$, (\ref{prop}) follows directly from Lemma \ref{int1}.

 We are only left with the case  when both $z+h\in D \setminus \mathcal N(t)$ and $z\in D\setminus \mathcal N(t')$. Noticing that $|z+h-t|\le |z-t|+|h|< \frac{\delta_0}{4}$ and $|z-t'|\le |z-(z+h)|+|z+h-t'|< \frac{\delta_0}{4}$, it implies  by definition of $\mathcal N(t)$ and $\mathcal N(t')$ that  $ |z+h-t|\ge 4 |z+h-t'|$ and $  |z-t'|\ge 4|z-t|, $
 or equivalently,
 $$  |z+h-t'|\le \frac{1}{4}|z+h-t| \ \ \ \text{and}\ \  \ \  |z-t|\le \frac{1}{4}|z-t|. $$
  We claim that
 \begin{equation}\label{bd}
 |z+h-t'|\le |h|, |z-t|\le |h|,\ \text{and}\  |t-t'|\le 3|h|.\end{equation}
 Indeed, since $ |z+h-t'|\le \frac{1}{4}|z+h-t|\le \frac{1}{4}(|z+h-t'|+|t'-t|),$ we have
\begin{equation*}
   |z+h-t'|\le \frac{1}{3}|t'-t|.
\end{equation*}
Similarly, $$|z-t|\le \frac{1}{3}|t'-t|.$$
On the other hand, since
$|t'-t|\le |t'-z-h|+|z+h-z|+|z-t|\le \frac{2}{3}|t'-t|+|h|$, one infers
$$|t'-t|\le 3|h|.$$ Hence
$$|z+h-t'|\le |h|, \ \ |z-t|\le |h|.$$ The claim is proved.

Now we estimate $$ |Sf(z+h)-Sf(z)|   \le |Sf(z+h)-\Phi f(t')|+|Sf(z)-\Phi f(t)|+|\Phi f(t) -\Phi f(t')|$$ for  $z, z+h, t$ and $t'$ as previously.
Because $z+h\in \mathcal N(t')$ and  $|z+h-t'|\le |h|\le \min\{h_0, \frac{s_0}{2}, e^{-\nu-1}\}$ by (\ref{bd}), we deduce from Lemma \ref{bound},
\begin{equation*}
\begin{split}
    |Sf(z+h)-\Phi f(t')|&\le  \left\{
      \begin{array}{cc}& C\|f\|_{C^{ L^\alpha Log^\nu L}(D)} |z+h-t'|^\alpha|\ln |z+h-t'||^{\nu}, 0<\alpha<1 \\
     & C\|f\|_{C^{ L^1 Log^\nu L}(D)} |z+h-t'||\ln |z+h-t'||^{\nu+1}, \alpha=1
      \end{array}
\right.\\
&\le  \left\{
      \begin{array}{cc}& C\|f\|_{C^{ L^\alpha Log^\nu L}(D)} |h|^\alpha|\ln |h||^{\nu}, 0<\alpha<1; \\
     & C\|f\|_{C^{ L^1 Log^\nu L}(D)} |h||\ln |h||^{\nu+1}, \alpha=1.
      \end{array}
\right.
\end{split}
\end{equation*}
Here we have used the non-decreasing property of the real-valued functions $s^\alpha |\ln s|^{\nu}$ and $s |\ln s|^{\nu+1}$  when $s$ is less than $\min\{h_0, e^{-\nu-1}\}$. Similarly,
\begin{equation*}
   |Sf(z)-\Phi f(t)||\le  \left\{
      \begin{array}{cc}& C\|f\|_{C^{ L^\alpha Log^\nu L}(D)} |h|^\alpha|\ln |h||^{\nu}, 0<\alpha<1; \\
     & C\|f\|_{C^{ L^1 Log^\nu L}(D)} |h||\ln |h||^{\nu+1}, \alpha=1.
      \end{array}
\right.
\end{equation*}
Lastly, since $|t'-t|\le 3|h|\le  \min \{\frac{h_0}{3c_0}, \frac{s_0}{6c_0}, \frac{\delta_0}{2}, e^{-\nu-1}\}$, by Lemma \ref{int},
\begin{equation*}
\begin{split}
   |\Phi f(t)-\Phi f(t')||&\le  \left\{
      \begin{array}{cc}& C\|f\|_{C^{ L^\alpha Log^\nu L}(D)} |t-t'|^\alpha|\ln |t-t'||^{\nu}, 0<\alpha<1 \\
     & C\|f\|_{C^{ L^1 Log^\nu L}(D)} |t-t'||\ln |t-t'||^{\nu+1}, \alpha=1
      \end{array}
\right.\\
& \le \left\{
      \begin{array}{cc}& C\|f\|_{C^{ L^\alpha Log^\nu L}(D)} |h|^\alpha|\ln |h||^{\nu}, 0<\alpha<1; \\
     & C\|f\|_{C^{ L^1 Log^\nu L}(D)} |h||\ln |h||^{\nu+1}, \alpha=1.
      \end{array}
\right.
\end{split}
\end{equation*}
 The proof of the  proposition is  complete.
\end{proof}
\medskip

\begin{thm}\label{ok1}
 Let $D$ be a bounded domain in $\mathbb C$ with $C^{1,\alpha}$  boundary, $k\in \mathbb Z^+\cup \{0\}, 0<\alpha\le 1$. Then $S$ defined in (\ref{TS1}) sends  $C^{L^\alpha Log^\nu L}(D)$ into itself when $0<\alpha<1$, and into $ C^{L^1 Log^{\nu+1}  L}(D)$ if $\alpha=1$. Moreover, there exists a constant $C$ dependent only on $D, \alpha$ and $ \nu$, such that
 for any $f\in C^{L^\alpha Log^\nu L}(D)$,
$$
\|Sf\|_{C^{L^\alpha Log^\nu L}(D)}\le
     C\|f\|_{C^{L^\alpha Log^\nu L}(D)}$$ if $0<\alpha<1$,
and     $$ \|Sf\|_{C^{L^1 Log^{\nu+1}  L}(D)}\le C\|f\|_{C^{L^1 Log^{\nu}L}(D)}$$ if $\alpha =1$.
   \end{thm}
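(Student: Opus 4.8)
The plan is to derive the operator bound by separating the norm into its two natural pieces and feeding each to a result already established. Recalling that $\|Sf\|_{C^{L^\alpha Log^\nu L}(D)}=\|Sf\|_{C(D)}+H^{\nu}[Sf]$ (with $H^{\nu+1}$ replacing $H^{\nu}$ in the target when $\alpha=1$), it suffices to bound the supremum part by the classical Hölder theory and the Log-Hölder semi-norm by Proposition \ref{S1}. The entire analytic difficulty has, in effect, already been packaged into Proposition \ref{S1}; what is left is to supply the supremum estimate and to extend the increment estimate from small scales to the full admissible range $0<|h|\le\frac12$.

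First I would control the supremum. Fix $\beta\in(0,1)$ with $\beta<\alpha$ when $\alpha<1$, and any $\beta\in(0,1)$ when $\alpha=1$. The embedding chain recorded after Definition \ref{def} gives $C^{L^\alpha Log^\nu L}(D)\hookrightarrow C^{\beta}(D)$ when $\alpha<1$, while for $\alpha=1$ the same embedding follows directly from the elementary bound $|h|\,|\ln|h||^{\nu}\le C|h|^{\beta}$ valid for $|h|\le\frac12$ (here $\nu\ge0$); in either case $\|f\|_{C^{\beta}(D)}\le C\|f\|_{C^{L^\alpha Log^\nu L}(D)}$. Since $0<\beta<1$, the second part of Theorem \ref{123} (with $k=0$, using the $C^{1,\alpha}$ boundary) yields $\|Sf\|_{C^{\beta}(D)}\le C\|f\|_{C^{\beta}(D)}$, whence $\|Sf\|_{C(D)}\le\|Sf\|_{C^{\beta}(D)}\le C\|f\|_{C^{L^\alpha Log^\nu L}(D)}$. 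This simultaneously guarantees that $Sf$ is bounded on $D$ and settles the $C(D)$ contribution in both regimes.

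For the semi-norm, Proposition \ref{S1} already provides the decisive estimate for all $z,z+h\in D$ with $|h|\le h_1:=\min\{\frac{h_0}{9c_0},\frac{s_0}{18c_0},\frac{\delta_0}{16},\frac{e^{-\nu-1}}{3}\}$, namely $|Sf(z+h)-Sf(z)|\le C\|f\|_{C^{L^\alpha Log^\nu L}(D)}|h|^{\alpha}|\ln|h||^{\nu}$ when $0<\alpha<1$, and the analogous bound with $|h|\,|\ln|h||^{\nu+1}$ when $\alpha=1$. To pass to the full range $0<|h|\le\frac12$ in the definition of $H^{\nu}$, I would invoke the remark following the norm definitions that the cutoff $\frac12$ may be lowered to any smaller positive number at the cost of an equivalent norm. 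Concretely, on the intermediate range $h_1\le|h|\le\frac12$ the factor $|h|^{\alpha}|\ln|h||^{\nu}$ (respectively $|h|\,|\ln|h||^{\nu+1}$) is bounded below by a positive constant depending only on $h_1,\alpha,\nu$, since there $|\ln|h||\in[\ln 2,|\ln h_1|]$ stays away from $0$ and $\infty$; combined with the trivial bound $|Sf(z+h)-Sf(z)|\le 2\|Sf\|_{C(D)}$ and the supremum estimate above, this disposes of the intermediate scales. Hence $H^{\nu}[Sf]\le C\|f\|_{C^{L^\alpha Log^\nu L}(D)}$ when $0<\alpha<1$, and $H^{\nu+1}[Sf]\le C\|f\|_{C^{L^1 Log^\nu L}(D)}$ when $\alpha=1$.

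Adding the two contributions gives exactly $\|Sf\|_{C^{L^\alpha Log^\nu L}(D)}\le C\|f\|_{C^{L^\alpha Log^\nu L}(D)}$ for $0<\alpha<1$, and $\|Sf\|_{C^{L^1 Log^{\nu+1}L}(D)}\le C\|f\|_{C^{L^1 Log^\nu L}(D)}$ for $\alpha=1$, which is the assertion. I do not expect a genuine obstacle at this stage: the real work—the extra logarithmic factor gained at the critical scale $|h|\approx\mathrm{dist}(z,\partial D)$, together with the splitting into the nontangential region $\mathcal N(t)$ versus the deep interior—is already absorbed into Proposition \ref{S1} via Lemmas \ref{int}, \ref{bound}, \ref{int1} and the calculus bounds of Lemma \ref{elem}. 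The only points requiring attention in the assembly are the uniform lower bound on $|h|^{\alpha}|\ln|h||^{\nu}$ over the intermediate range (which would degenerate as $|h|\to1$ when $\nu>0$, explaining the cutoff $|h|\le\frac12$) and the careful bookkeeping of which source norm controls which target semi-norm in the borderline case $\alpha=1$.
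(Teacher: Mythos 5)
Your proof is correct and follows essentially the same route as the paper's: the sup-norm is controlled by embedding $C^{L^\alpha Log^\nu L}(D)$ into a classical H\"older space $C^{\epsilon}(D)$ and invoking Theorem \ref{123}, while the semi-norm is delegated entirely to Proposition \ref{S1}. Your explicit handling of the intermediate scales $h_1\le|h|\le\frac{1}{2}$ simply spells out what the paper absorbs into its Section 2 remark that the cutoff in the semi-norm can be lowered at the cost of an equivalent norm.
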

\begin{proof}
Choose $\epsilon$ such that $0<\epsilon< \alpha\le 1$. We have   $\|f\|_{C^{\epsilon}(D)}\le C\|f\|_{C^{ L^\alpha Log^\nu L}(D)}$ with $C$ dependent only on $\nu, \alpha, \epsilon$ and  $D$. Hence
$$\|Sf\|_{C(D)} \le \|Sf\|_{C^{\epsilon}(D)}\le C \|f\|_{C^{\epsilon}(D)}\le C\|f\|_{C^{L^\alpha Log^\nu L}(D)}.$$
The rest of the theorem  follows directly from Proposition \ref{S1}.
\end{proof}

\section{Proof of  Theorem \ref{main}}
We are now in a position to prove Theorem \ref{main}. Let $\Omega = D\times \Lambda\subset \mathbb C^2$, where $D\subset \mathbb C$ is a bounded domain with $C^{k+1, \alpha}$ boundary, and $\Lambda$ is an open set in $\mathbb R$ or $\mathbb C$. Let $S$ be defined in (\ref{S2}). For $0<\epsilon< \alpha\le 1$, %since $C^{k, L^\alpha Log^\nu L}(\Omega)\subset C^{k, \alpha-\epsilon}(\Omega)$, we have $\|\cdot\|_{C^{k, \alpha-\epsilon}(\Omega)}\le C\|\cdot\|_{C^{k, L^\alpha Log^\nu L}(\Omega)}$ with
there exists a constant $C$ dependent only on $\nu, \alpha, \epsilon$ and  $\Omega$, such that for all  $f\in C^{k, L^\alpha Log^\nu L}(\Omega)$, $$\|Sf\|_{C^k(\Omega)}\le C\|Sf\|_{C^{k, \epsilon}(\Omega)}\le C\|f\|_{C^{k, \epsilon}(\Omega)}\le C\|f\|_{C^{k, L^\alpha Log^\nu L}(\Omega)}.$$

We shall further prove for $|\gamma|=k$,
$
    H^{\nu+1}[D^\gamma Sf]\le C\|f\|_{C^{k, L^\alpha Log^\nu L}(\Omega)}.
$  Noticing that $S f$ is holomorphic with respect to $z\in D$, we  assume $D^\gamma =\partial_z^{\gamma_1}D_\lambda^{\gamma_2}$. Making use of integration by part, we obtain for any $(z, \lambda)\in \Omega$,
  \begin{equation*}
  \begin{split}
   D^\gamma S f(z, \lambda) =&\frac{1}{2\pi i}\partial_z^{\gamma_1} S D^{\gamma_2}_\lambda f(z, \lambda) \\
   = &\frac{1}{2\pi i}\partial_z^{\gamma_1-1}\int_{\partial D}\partial_{z} \frac{D^{\gamma_2}_\lambda f(\zeta, \lambda)}{\zeta-z}d\zeta \\
    =&\frac{1}{2\pi i}\partial_z^{\gamma_1-1}\int_{\partial D} \frac{\partial_{\zeta}D^{\gamma_2}_\lambda f(\zeta, \lambda)}{\zeta-z}d\zeta \\
    &\cdots\\
    =&: \frac{1}{2\pi i}\int_{\partial D} \frac{\tilde f(\zeta, \lambda)}{\zeta-z}d\zeta = S\tilde f(z, \lambda)
      \end{split}
  \end{equation*}
   with $\tilde f: = \partial_z^{\gamma_1}  D^{\gamma_2}_\lambda f\in C^{L^\alpha Log^\nu L}(\Omega)$ and $\|\tilde f\|_{C^{L^\alpha Log^\nu L}(\Omega)}\le \|f\|_{C^{k, L^\alpha Log^\nu L}(\Omega)}$. (See  \cite{PZ} Proposition 3.3, or  \cite{V} p. 21-22 for more details.) Therefore, it will suffice to show
  $H^{\nu+1}[S\tilde f]\le C\|\tilde f\|_{C^{ L^\alpha Log^\nu L}(\Omega)}$. By (the proof of) Proposition \ref{S1}, it is already clear that for each $\lambda\in \Lambda$,
  $S\tilde f(\zeta, \lambda)$  as a function of $\zeta\in D$ satisfies  $$H_D^{\nu+1}[S\tilde f(\cdot, \lambda)]\le C\|\tilde f\|_{C^{ L^\alpha Log^{\nu}L}(\Omega)}$$
  for a constant $C$ independent of $\tilde f$ and $\lambda$. In view of Lemma \ref{comp}, we only need to show  for each $z\in D$,  $S\tilde f(z, \zeta)$  as a function of $\zeta\in \Lambda$ satisfies \begin{equation}\label{rest}
    H_\Lambda^{\nu+1}[S\tilde f(z, \cdot)]\le C\|\tilde f\|_{C^{ L^\alpha Log^{\nu}L}(\Omega)}
\end{equation}
 for a constant $C$ independent of $\tilde f$ and $z$.

 To do so we shall apply the Maximum Modulus Principle of holomorphic functions.   First consider $z=t\in \partial D$. Without loss of generality, assume $t\in \Gamma_1$. By Sokhotski--Plemelj Formula, the non-tangential limit of $S\tilde f$ at $(t, \lambda)\in \partial D\times \Lambda$ is
 $$\Phi\tilde f(t, \lambda): = \frac{1}{2\pi i}\int_{\partial D}\frac{\tilde f(\zeta, \lambda)}{\zeta-t}d\zeta + \frac{1}{2}\tilde f(t, \lambda).$$
 Here the first term is interpreted as the Principal Value. We shall prove that for $\lambda, \lambda+h\in \Lambda$ with $0<|h|\le \min\{h_0, \frac{s_1}{2}\}$,
 \begin{equation}\label{S}
   \left |\int_{\partial D}\frac{\tilde f(\zeta, \lambda+h)}{\zeta-t}d\zeta - \int_{\partial D}\frac{\tilde f(\zeta, \lambda)}{\zeta-t}d\zeta\right|\le C |h|^{\alpha}|\ln |h||^{\nu+1} \|\tilde f\|_{C^{ L^\alpha Log^{\nu}L}(\Omega)}
 \end{equation}
 for a constant C independent of $\tilde f, t, \lambda$ and $h$.

 Indeed, write
 \begin{equation*}
 \begin{split}
  \int_{\partial D}\frac{\tilde f(\zeta, \lambda+h)-\tilde f(\zeta, \lambda)}{\zeta-t}d\zeta =& \int_{\partial D}\frac{\tilde f(\zeta, \lambda+h)-\tilde f(t, \lambda+h) -\tilde f(\zeta, \lambda)+ \tilde f(t, \lambda)}{\zeta-t}d\zeta\\
   &+ (\tilde f(t, \lambda+h)- \tilde f(t, \lambda))\int_{\partial D}\frac{1}{\zeta-t}d\zeta\\
    =&: I+II.
 \end{split}
 \end{equation*}
 Since $|\int_{\partial D}\frac{1}{\zeta-t}d\zeta|$ is bounded in terms of the Principal Value, $$|II|\le C|h|^{\alpha}|\ln |h||^{\nu+1} \|\tilde f\|_{C^{ L^\alpha Log^{\nu}L}(\Omega)}$$ for a constant $C$ independent of $\tilde f, t, \lambda$ and $h$.

 For $I$, let $l$ be the arc on $\partial D$ that is centered at $t$ with total arclength $2|h|$ and  $s$ be an arclength parameter of $\partial D$ such that  $\zeta|_{s=0}=t$. In particular, $l\subset \Gamma_1$. Then
 \begin{equation*}
 \begin{split}
   I = & \int_{l}\frac{\tilde f(\zeta, \lambda+h)-\tilde f(t, \lambda+h) -\tilde f(\zeta, \lambda)+ \tilde f(t, \lambda)}{\zeta-t}d\zeta+\\
   &+\int_{\Gamma_1\setminus l}\frac{(\tilde f(\zeta, \lambda+h)-\tilde f(t, \lambda+h)) -(\tilde f(\zeta, \lambda)-\tilde f(t, \lambda))}{\zeta-t}d\zeta+\\
   &+\int_{\cup_{j=2}^N \Gamma_j}\frac{(\tilde f(\zeta, \lambda+h)-\tilde f(\zeta, \lambda))-(\tilde f(t, \lambda+h) -\tilde f(t, \lambda))}{\zeta-t}d\zeta\\
   =&: I_1+I_2+ I_3.
   \end{split}
 \end{equation*}

Because $|\zeta-t|\ge \delta_0$ for $\zeta\in \cup_{j=2}^N \Gamma_j$ and $|\tilde f(\zeta, \lambda+h)-\tilde f(\zeta, \lambda))-(\tilde f(t, \lambda+h) -\tilde f(t, \lambda)|\le |h|^\alpha|\ln|h||^{\nu}\|\tilde f\|_{C^{ L^\alpha Log^{\nu}L}(\Omega)}$, one has
$$|I_3|\le C|h|^\alpha|\ln|h||^{\nu}\|\tilde f\|_{C^{ L^\alpha Log^{\nu}L}(\Omega)}$$
for a constant $C$ independent of $\tilde f, t, \lambda$ and $h$.

Recall by the chord-arc condition, $|\zeta-t|\approx |\zeta, t|=\min\{s, s_1-s\}$ on $\Gamma_1$. Moreover, the numerator of $I_1$ is less than $C|\zeta-t|^\alpha|\ln|\zeta-t||^{\nu} \|\tilde f\|_{C^{ L^\alpha Log^{\nu}L}(\Omega)}$. It follows from Lemma \ref{elem}
  $$|I_1|\le C\int_0^{|h|} s^{\alpha-1}|\ln s|^\nu ds \le C|h|^\alpha|\ln|h||^{\nu}\|\tilde f\|_{C^{ L^\alpha Log^{\nu}L}(\Omega)}$$ for a constant $C$ independent of $\tilde f, t, \lambda$ and $h$.

 Rearrange $I_2$ and we obtain
 \begin{equation*}
 \begin{split}
  | I_2| &\le \left |\int_{\Gamma_1\setminus l}\frac{\tilde f(\zeta, \lambda+h) - \tilde f(\zeta, \lambda)}{\zeta-t}d\zeta \right|+  \left |(\tilde f(t, \lambda+h)-\tilde f(t, \lambda))\int_{\Gamma_1\setminus l}\frac{1}{\zeta-t}d\zeta\right|\\
  & \le C|h|^\alpha|\ln |h||^{\nu}\|\tilde f\|_{C^{\alpha}(\Omega)}\int_{|h|}^{\frac{s_1}{2}} \frac{1}{s} ds +C|h|^\alpha|\ln|h||^{\nu}\|\tilde f\|_{C^{ L^\alpha Log^{\nu}L}(\Omega)}\\
   & \le  C|h|^{\alpha}|\ln|h||^{\nu+1}\|\tilde f\|_{C^{ L^\alpha Log^{\nu}L}(\Omega)}.
  \end{split}
 \end{equation*}

We have thus shown (\ref{S}) holds, and hence there exists a constant $C$  such that for each $z=t\in \partial D$, $H_\Lambda^{\nu+1}[\Phi \tilde  f(t, \cdot)]\le C\|\tilde f\|_{C^{L^\alpha Log^{\nu+1}  L}(\Omega)}$ with $C$ independent of $\tilde f$ and $t$. Notice that for each fixed $\zeta\in \Lambda$, $S \tilde f(z, \zeta)$ is holomorphic as a function of $z\in D$ and  by Plemelj--Privalov Theorem, continuous up to the boundary  with boundary value  $\Phi\tilde f(z, \zeta)$.  Applying the Maximum Modulus Theorem to the holomorphic function $\frac{S  \tilde f(z, \lambda+h) -S  \tilde f(z, \lambda)}{|h|^{\alpha}|\ln|h||^{\nu+1}}$ of $z\in D$ for each fixed $\lambda$ and $\lambda+h$ with $0<|h|\le \min\{h_0, \frac{s_0}{2}\}$, we deduce
\begin{equation*}
  \begin{split}
   \sup_{z\in D}\frac{|S  \tilde f(z, \lambda+h) -S  \tilde f(z, \lambda)|}{|h|^{\alpha}|\ln|h||^{\nu+1}}    \le &\sup_{t\in\partial D}\frac{|\Phi  \tilde f(t, \lambda+h) -\Phi  \tilde f(t, \lambda)|}{|h|^{\alpha}|\ln|h||^{\nu+1}}\\
   =  &\sup_{t\in\partial D}H_\Lambda^{\nu+1}[\Phi \tilde  f(t, \cdot)]\\
   \le & C\|\tilde f\|_{C^{L^\alpha Log^{\nu}L}(\Omega)},
  \end{split}
\end{equation*}
% each $z_1\in  D_1$, $\|S_1 f(z_1, \zeta)\|_{C^{k, \alpha}(D_1)}\le C\|f\|_{C^{k, \alpha}(\Omega)}$ as a function of $\zeta\in D_2$, where C is independent of $z_1$.
with $C$ independent of $\tilde f, z_1, z_2$ and $z'_2$. (\ref{rest}) is thus verified and the proof of Theorem \ref{main} is complete.
\medskip

We conclude the section by pointing out that   the proof of Tumanov's example in Section 3 indicates that  for any $\mu<1$, $S$ does not send  $C^{\alpha}  (\triangle^2)$ into $C^{L^\alpha Log^{\mu}  L}(\triangle^2), 0<\alpha<1$. Theorem \ref{main} thus is sharp in view of the example.

%\begin{rem}\label{re}
%Let $\Omega$ and $ \{f_n\}_{n=1}^\infty$ be defined as in Theorem \ref{example}. Then $\|f_n\|_{C^\alpha(\Omega)}\le 4$. However, for any $0<\mu<1$,   $\|Sf_n\|_{C^{L^\alpha Log^\mu L}(\Omega)}\rightarrow \infty$.
%\end{rem}

%\begin{proof}
%By the proof of Theorem \ref{example},  $\|f_n\|_{C^\alpha(\Omega)}\le 4$ and $ |S\phi_n(0)|\ge \frac{\ln n}{16}$ for  large $n$. On the other hand, when $\mu>0$ and $n$ is large enough,
%$$H_\Lambda^\mu[\psi_n]\ge \frac{|\psi_n( \frac{1}{n^2})-\psi_n(0)|}{(\frac{1}{n^2})^\alpha |\ln \frac{1}{n^2}|^\mu}  =\frac{|\sin(\frac{1}{n^\alpha})|}{\frac{1}{n^{\alpha}}(2\ln n)^\mu}\ge \frac{1}{2(\ln n)^\mu}. $$
%Hence if $\mu<1$,
%\begin{equation*}
    %\begin{split}
 %\|Sf_n\|_{C^{L^\alpha Log^\mu L}(\Omega)}\ge  H_\Lambda^\mu[Sf_n]\ge |S\phi_n(0)| H_\Lambda^\mu[\psi_n]
 % \ge \frac{\ln n }{16} \frac{1}{2(\ln n)^\mu} = \frac{(\ln n)^{1-\mu}}{32}\rightarrow \infty.
 %   \end{split}
%\end{equation*}
%\end{proof}

\section{Proof of Theorem \ref{cor} }
Let $D_j\subset\mathbb C$, $j= 1, \ldots, n$, be bounded domains with  $C^{k+1,\alpha}$ boundary, $n\ge 2$,  $k\in \mathbb Z^+\cup \{0\}, 0<\alpha\le 1$, and $\Omega: = D_1\times\cdots\times D_n$. Given a function $f\in C^{k, L^\alpha Log^\nu L} (\Omega)$, since $C^{k, L^\alpha Log^\nu L} (\Omega)\xhookrightarrow{i} C^{k,\epsilon} (\Omega)$ for $0<\epsilon<\alpha$, the following two operators are well defined for $z\in \Omega$,
\begin{equation}\label{TS}
  \begin{split}
    T_j f(z):&=-\frac{1}{2\pi i}\int_{D_j}\frac{f(z_1, \ldots, z_{j-1}, \zeta_j, z_{j+1}, \ldots, z_n)}{\zeta_j-z_j}d\bar\zeta_j\wedge\zeta_j;\\
    S_j f(z):&=\frac{1}{2\pi i}\int_{\partial D_j}\frac{f(z_1, \ldots, z_{j-1}, \zeta_j, z_{j+1}, \ldots, z_n)}{\zeta_j-z_j}d\zeta_j.
  \end{split}
\end{equation}

By Theorem \ref{main}, $S_j$ is a bounded operator sending $C^{k, L^\alpha Log^\nu L} (\Omega)$ into $C^{k, L^\alpha Log^{\nu+1}  L} (\Omega)$. It was proved in \cite{PZ} that the operator $T_j$ is  bounded  between  $C^{k, \alpha}(\Omega)$.   In the following, we generalize this result and show $T_j$ is bounded sending $C^{k, L^\alpha Log^\nu L} (\Omega)$ into itself.

\begin{prop}\label{ok}
For each $j\in\{1, \ldots, n\}$, $T_j$ is a bounded  operator sending $C^{k, L^\alpha Log^\nu L} (\Omega)$ into $C^{k, L^\alpha Log^\nu L} (\Omega)$, $k\in \mathbb Z^+\cup \{0\}, 0<\alpha\le 1,  \nu\in\mathbb R$. Namely, there exists a constant $C$ dependent only on $\Omega, k, \alpha$ and $\nu$, such that for $f\in C^{k, L^\alpha Log^\nu L} (\Omega)$,
      \begin{equation*}
    \begin{split}
       \|T_j f\|_{C^{k, L^\alpha Log^\nu L} (\Omega)}\le C\|f\|_{C^{k, L^\alpha Log^\nu L} (\Omega)}.
         \end{split}
  \end{equation*}
 \end{prop}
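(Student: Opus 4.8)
The plan is to split the estimate into the $C^k(\Omega)$-norm, which is immediate, and the top-order Log-H\"older semi-norm, which carries all the difficulty, and then to reduce the latter to one-variable slice estimates via Lemma \ref{comp}. For the $C^k$ part, since $C^{k, L^\alpha Log^\nu L}(\Omega)\hookrightarrow C^{k,\epsilon}(\Omega)$ for any $0<\epsilon<\alpha$ and $T_j$ is bounded on $C^{k,\epsilon}(\Omega)$ by the result of \cite{PZ}, one gets $\|T_jf\|_{C^k(\Omega)}\le C\|T_jf\|_{C^{k,\epsilon}(\Omega)}\le C\|f\|_{C^{k,\epsilon}(\Omega)}\le C\|f\|_{C^{k,L^\alpha Log^\nu L}(\Omega)}$. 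It then remains to bound $H^\nu[D^\gamma T_jf]$ for every $|\gamma|=k$. Writing $\Omega=D_j\times\Lambda$ with $\Lambda:=\prod_{i\ne j}D_i$ and invoking Lemma \ref{comp}, it suffices to control the slice semi-norm $H^\nu_{D_j}[D^\gamma T_jf(\cdot,\lambda)]$ in the $z_j$-direction together with the transverse semi-norm $H^\nu_{\Lambda}[D^\gamma T_jf(z_j,\cdot)]$, uniformly in the frozen variables.

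First I would treat the model case $k=0$, which already displays the two mechanisms at play. In the transverse direction the Cauchy kernel $1/(\zeta_j-z_j)$ does not see $\lambda$, so for fixed $z_j$ the $\lambda$-increment passes under the integral, $T_jf(z_j,\lambda+h)-T_jf(z_j,\lambda)=-\frac{1}{2\pi i}\int_{D_j}\frac{f(\zeta_j,\lambda+h)-f(\zeta_j,\lambda)}{\zeta_j-z_j}\,d\bar\zeta_j\wedge d\zeta_j$; bounding the numerator by $|h|^\alpha|\ln|h||^\nu\|f\|$ and using that $\int_{D_j}|\zeta_j-z_j|^{-1}\,dA$ is finite uniformly in $z_j$ (the planar Cauchy singularity is integrable) yields the transverse bound with no loss. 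In the $z_j$-direction I would instead exploit the gain of regularity: the slice $g:=f(\cdot,\lambda)$ lies in $C^\epsilon(D_j)$ uniformly in $\lambda$, so by Theorem \ref{123} $T_jf(\cdot,\lambda)=Tg\in C^{1,\epsilon}(D_j)$ is Lipschitz in $z_j$ with norm $\le C\|f\|$, and a Lipschitz modulus dominates $|h|^\alpha|\ln|h||^\nu$ for small $h$ (here the standing convention $\nu\ge0$ when $\alpha=1$ is used). Combining the two through Lemma \ref{comp} settles $k=0$.

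For general $k$ the plan is to reduce each $D^\gamma T_jf$, $|\gamma|=k$, to the $k=0$ case through the commutation relations of $T_j$. Transverse derivatives commute with $T_j$, so $D^{\gamma'}T_jf=T_j(D^{\gamma'}f)$ whenever $\gamma'$ involves only the variables $z_i,\bar z_i$ with $i\ne j$; a factor $\partial_{\bar z_j}$ collapses via $\partial_{\bar z_j}T_j=\mathrm{Id}$ onto a derivative of $f$ of order $k-1$, which is $C^1$ and hence carries the required modulus trivially; and each factor $\partial_{z_j}$ is moved onto a $\partial_{\zeta_j}$-derivative of $f$ by integration by parts. This integration by parts is the delicate point, because besides $T_j$ of a genuine $k$-th order derivative of $f$ (handled by the $k=0$ case) it produces boundary Cauchy-type integrals along $\partial D_j$.

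The main obstacle is precisely these boundary terms. As one-dimensional Cauchy singular integrals with parameters they behave like the operator $S$ of Theorem \ref{main} and therefore \emph{gain} a logarithm, which by itself would push $D^\gamma T_jf$ into the strictly larger space $C^{L^\alpha Log^{\nu+1}L}(\Omega)$. The resolution I would pursue is twofold: organize the reduction so that every boundary term acts only on derivatives of $f$ of order $\le k-1$, which are $C^1$ and thus sit in a Log-class one power smaller, so that the single-logarithm loss of Theorem \ref{main} returns the term to $C^{L^\alpha Log^\nu L}(\Omega)$; and, for the genuinely two-dimensional contributions, exploit that --- unlike the boundary operator $S$ --- the area transform $T_j$ gains no logarithm at all, the extra radial power of the planar measure neutralizing the Cauchy singularity. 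This last point is quantified by Lemma \ref{elem}(1), $\int_0^h s^{\alpha-1}|\ln s|^\nu\,ds\le Ch^\alpha|\ln h|^\nu$, which is exactly the bound that averts the logarithmic loss incurred in Section 4; making this dichotomy precise, with the endpoint $\alpha=1$ requiring the extra care already anticipated by the convention $\nu\ge0$, is where the bulk of the work will lie.
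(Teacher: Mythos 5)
Your $C^k$ bound, your $k=0$ case, and the reduction to slice estimates via Lemma \ref{comp} all align with the paper. The divergence is in your treatment of $k\ge 1$, and it contains a genuine gap. You move each $\partial_{z_j}$ onto the data by integration by parts, producing boundary Cauchy integrals along $\partial D_j$, and you propose to neutralize the logarithm these $S$-type terms lose by noting that they act only on derivatives of $f$ of order $\le k-1$, ``which are $C^1$ and thus sit in a Log-class one power smaller,'' so that Theorem \ref{main} returns them to $C^{L^\alpha Log^\nu L}(\Omega)$. This mechanism is sound when $\alpha<1$, because a Lipschitz modulus is dominated by $|h|^\alpha|\ln|h||^\mu$ for \emph{every} $\mu\in\mathbb R$, in particular $\mu=\nu-1$. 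But it fails outright at the endpoint $\alpha=1$ with $0\le\nu<1$, which the statement includes (e.g. $\nu=0$, the Lipschitz scale $C^{k,1}$): as noted right after Definition \ref{def}, $C^{L^1 Log^{\nu-1}L}$ is trivial (constants only) when $\nu<1$, so no nonconstant $C^1$ function sits ``one log power smaller''; the best available class for the boundary-term inputs is $C^{L^1 Log^{0}L}$, and Theorem \ref{main} then lands only in $C^{L^1 Log^{1}L}$, which is strictly weaker than the target $C^{L^1 Log^{\nu}L}$ for $\nu<1$. The ``extra care at $\alpha=1$'' you defer to the end is therefore not a technicality: any route that feeds the boundary terms through the Log-H\"older mapping property of $S$ inevitably loses a logarithm that cannot be repaid at this endpoint, so the argument as designed cannot close there. (A repair would have to exploit the spare derivative differently --- e.g.\ via sup-norm bounds on $S$ applied to the transverse increments, whose $\zeta$-Lipschitz constant is already of size $|h|^\alpha|\ln|h||^{\nu}$ --- which is a different mechanism from the one you describe.)

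For contrast, the paper never integrates by parts and never sees boundary terms. It keeps $D_1^{\gamma_1}T_1$ intact and invokes the classical gain of one derivative, Theorem \ref{123}(2): for each frozen $z_2$, $\|D_1^{\gamma_1}T_1(D_2^{\gamma_2}f)(\cdot,z_2)\|_{C^{\alpha+\epsilon}(D_1)}$ (resp.\ the $C^{1}(D_1)$ norm when $\alpha=1$) is controlled by $\|D_2^{\gamma_2}f\|_{C^{\gamma_1-1,\alpha+\epsilon}}$ (resp.\ $\|D_2^{\gamma_2}f\|_{C^{\gamma_1,\epsilon}}$), so the $z_1$-slices are H\"older of exponent strictly above $\alpha$, resp.\ Lipschitz, with no logarithm appearing anywhere --- this is exactly what rescues the $\alpha=1$ case. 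For the transverse modulus, instead of commutation identities it applies $D_1^{\gamma_1}T_1$ to the difference quotient $F_{z_2,z_2'}=\bigl(D_2^{\gamma_2}f(\cdot,z_2)-D_2^{\gamma_2}f(\cdot,z_2')\bigr)/\bigl(|z_2-z_2'|^\alpha|\ln|z_2-z_2'||^\nu\bigr)$, which is bounded in $C^{\gamma_1}(D_1)$ uniformly in $z_2,z_2'$, and again only sup-norm output of $D_1^{\gamma_1}T_1$ is needed. In short, the paper's proof leans entirely on the area integral's full-derivative gain as a black box, whereas your proof routes part of the estimate through the boundary operator $S$, whose unavoidable logarithmic loss (Tumanov's example) is precisely what breaks the endpoint case.
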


\begin{proof}
Without loss of generality, we assume $n=2$ and $j=1$. As in \cite{PZ},  $\|T_1f\|_{C^k(\Omega)}\le C\|f\|_{C^k(\Omega)}$ for a constant $C$ independent of $f$. %Write $D^\gamma = D^{\gamma_1}_1D^{\gamma_2}_2$, where $D^{\gamma_j}_j $ represents the $\gamma_j$-th derivative with respect to $z_j$ variable and  $\gamma_1+\gamma_2\le k$. Then  $\|D^\gamma T_1 f\|_{C(\Omega)} = \|D^{\gamma_1}_1 T_1(D_2^{\gamma_2} f)\|_{C(\Omega)} = \sup_{z_2\in D_2}\|D^{\gamma_1}_1 T_1(D^{\gamma_2}_2 f)(\cdot, z_2)\|_{C(D_1)}\le C\|D^{\gamma_2}_2 f\|_{C^{\gamma_1-1, \alpha}(\Omega)}%\le \|T_1(D^{\gamma_2}_2 f)\|_{C^{\gamma_1, \alpha}(\Omega)}\le C\|D^{\gamma_2}_2 f\|_{C^{\gamma_1-1, \alpha}(\Omega)}\le C\|f\|_{C^{\gamma_1+\gamma_2-1, \alpha}(\Omega)} \le C\|f\|_{C^{k, \alpha}(\Omega)}$ by Theorem \ref{123}.
We only need to show  $$H^{\nu}[D^\gamma T_1f]\le C\|f\|_{C^{k, L^\alpha Log^\nu L}(\Omega)}$$ for some constant independent of $f$ for all $|\gamma|=k$.

Write $D^\gamma = D^{\gamma_1}_1D^{\gamma_2}_2$. Then $D^\gamma T_1 f = D^{\gamma_1}_1 T_1(D^{\gamma_2}_2 f)$. If $\alpha<1$, choose a positive number $0<\epsilon<1- \alpha$. So $\alpha+\epsilon<1$ and for each $z_2\in D_2$, $\|D^\gamma T_1 f(\cdot, z_2)\|_{C^{L^\alpha Log^\nu L}(D_1)}\le C\|D^{\gamma_1}_1 T_1(D^{\gamma_2}_2 f)(\cdot, z_2)\|_{C^{\alpha+\epsilon}(D_1)}$  for some constant $C$ independent of $f$ and $z_2$.  We shall show for each $z_2\in D_2$, $ \|D^{\gamma_1}_1 T_1(D^{\gamma_2}_2 f)(\cdot, z_2)\|_{C^{\alpha+\epsilon}(D_1)} \le C\| f\|_{C^{|\gamma|}(\Omega)}$.  Indeed,  by making use of  Theorem \ref{123}, if $\gamma_1 =0$, $$\|D^{\gamma_1}_1 T_1(D^{\gamma_2}_2 f)(\cdot, z_2)\|_{C^{\alpha+\epsilon}(D_1)}  =\| T_1(D^{\gamma_2}_2 f)(\cdot, z_2)\|_{C^{\alpha+\epsilon}(D_1)}  \le C \|D^{\gamma_2}_2 f\|_{C(\Omega)}\le C\| f\|_{C^{\gamma_2}(\Omega)};$$ If $\gamma_1\ge 1$, then $$\|D^{\gamma_1}_1 T_1(D^{\gamma_2}_2 f)(\cdot, z_2)\|_{C^{\alpha+\epsilon}(D_1)}\le  C\|D^{\gamma_2}_2 f\|_{C^{\gamma_1-1, \alpha+\epsilon}(\Omega)}\le C\| f\|_{C^{\gamma_1+\gamma_2}(\Omega)}$$
for some constant $C$  independent of $f$ and $z_2$. Altogether,  $D^\gamma T_1 f(\zeta, z_2)$ as a function of $\zeta\in D_1$ satisfies $$\|D^\gamma T_1 f(\cdot, z_2)\|_{C^{L^\alpha Log^\nu L}(D_1)}\le C\|D^{\gamma_1}_1 T_1(D^{\gamma_2}_2 f)(\cdot, z_2)\|_{C^{\alpha+\epsilon}(D_1)}\le C\| f\|_{C^{|\gamma|}(\Omega)}\le C\|f\|_{C^{k, L^\alpha Log^\nu L}(\Omega)}$$
for some constant $C$  independent of $f$ and $z_2$.
If $\alpha=1\ ( \text{ so} \ \nu\ge 0)$, choose $\epsilon<1$. Then $ \|D^\gamma T_1 f(\cdot, z_2)\|_{C^{L^1 Log^\nu L}(D_1)}\le C\|D^{\gamma_1}_1 T_1(D^{\gamma_2}_2 f)(\cdot, z_2)\|_{C^{1}(D_1)}\le C\| T_1(D^{\gamma_2}_2 f(\cdot, z_2))\|_{C^{\gamma_1+1, \epsilon}(D_1)}$ and hence by Theorem \ref{123}, $$\|D^\gamma T_1 f(\cdot, z_2)\|_{C^{L^1 Log^\nu L}(D_1)}
\le C\|D^{\gamma_2}_2 f\|_{C^{\gamma_1, \epsilon}(\Omega)}\le C\|f\|_{C^{|\gamma|, \epsilon}(\Omega)}\le C\|f\|_{C^{k, L^1 Log^\nu L}(\Omega)}$$ for some $C$ independent of $f$ and $z_2$.

 Let $z'_2(\ne z_2)\in D_2$ with $|z_2-z_2'|\le h_0$ and consider $F_{z_2, z'_2}(\zeta): = \frac{D^{\gamma_2}_2 f(\zeta, z_2)-D^{\gamma_2}_2 f(\zeta, z'_2)}{|z_2-z'_2|^\alpha|\ln |z_2-z_2'||^{\nu}}$ on $D_1$. Since $f\in C^{k,L^\alpha Log^\nu L}(\Omega)$, $F_{z_2, z'_2}\in C^{\gamma_1}(D_1)$ and $\| F_{z_2, z'_2}\|_{C^{\gamma_1}(D_1)}\le \|f\|_{C^{k,L^\alpha Log^\nu L}}(\Omega)$. If $\gamma_1=0$, $$\|D^{\gamma_1}_1T_1 F_{z_2, z'_2}\|_{C(D_1)} =\| T_1 F_{z_2, z'_2}\|_{C(D_1)} \le C \|F_{z_2, z'_2}\|_{ C(D_1)}  \le C\|f\|_{C^{k,L^\alpha Log^\nu L}(\Omega)}$$
for some constant $C$ independent of $f$, $z_2$ and $ z'_2$.  For $\gamma_1\ge 1$, choosing $\epsilon<\alpha$,
  we have from Theorem \ref{123},  $$\|D^{\gamma_1}_1T_1 F_{z_2, z'_2}\|_{C(D_1)}\le C \|F_{z_2, z'_2}\|_{ C^{\gamma_1-1, \epsilon}(D_1)} \le C\| F_{z_2, z'_2}\|_{C^{\gamma_1}(D_1)}  \le C\|f\|_{C^{k, L^\alpha Log^\nu L}(\Omega)}$$
    for some constant $C$ independent of $f$, $z_2$ and $ z'_2$. Hence for each $z_1\in D_1$,
  $$\frac{|D^\gamma T_1 f(z_1, z_2)-D^\gamma T_1f(z_1, z'_2)|}{|z_2-z'_2|^\alpha|\ln |z_2-z_2'||^\nu} = |D^{\gamma_1}_1 T_1 F_{z_2, z'_2}(z_1)|\le\|D^{\gamma_1}_1T_1 F_{z_2, z'_2}\|_{C(D_1)}\le C\|f\|_{C^{k, L^\alpha Log^\nu L}(\Omega)},$$
where $C$ is independent of $f$, $z_1, z_2$ and $z'_2$. The proof of the proposition is  complete in view of Lemma \ref{comp}.
\end{proof}

\medskip

\begin{thm}\label{ka}
 Let $\mathbf f=\sum_{j=1}^nf_jd\bar z_j\in C^{k, L^\alpha Log^{\nu}  L}(\Omega)$, $k\in \mathbb Z^+\cup\{0\},  0<\alpha\le 1$ and $ \nu\in \mathbb R$. Then
\begin{equation}\label{key}
  T\mathbf f: = \sum_{j=1}^n\prod_{l=1}^{j-1}T_jS_lf_j= T_1f_1+T_2S_1f_2+\cdots+ T_nS_1\cdots S_{n-1}f_n
\end{equation}
is in  $ C^{k, L^\alpha Log^{\nu +n-1}L}(\Omega)$   with $\|T\mathbf f\|_{C^{k, L^\alpha Log^{\nu +n-1}L}(\Omega)}\le C \|\mathbf f\|_{C^{k, L^\alpha Log^{\nu}L}(\Omega)}$ for some constant $C$ dependent only on $\Omega, k, \alpha$ and $\nu$.
\end{thm}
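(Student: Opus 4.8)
The plan is to prove the estimate termwise on the finite sum \eqref{key} and to track, for each summand, how the two families of operators shift the Log index along the Log-Hölder scale. By the triangle inequality it suffices to bound each summand $T_j S_1\cdots S_{j-1}f_j$ in the target norm $C^{k, L^\alpha Log^{\nu+n-1}L}(\Omega)$ by $C\|f_j\|_{C^{k,L^\alpha Log^\nu L}(\Omega)}$ and then sum over $j=1,\ldots,n$; since $\|f_j\|_{C^{k,L^\alpha Log^\nu L}(\Omega)}\le \|\mathbf f\|_{C^{k,L^\alpha Log^\nu L}(\Omega)}$ and $n$ is fixed, this yields the assertion. For $j=1$ the product over $l$ is empty and the summand is simply $T_1 f_1$, handled directly by Proposition \ref{ok}.

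The heart of the argument is a bookkeeping of indices. Starting from $f_j\in C^{k,L^\alpha Log^\nu L}(\Omega)$, I apply Theorem \ref{main} to each $S_l$, viewing $S_l$ as the operator $S$ of that theorem with $D=D_l$ and the remaining factors $\prod_{m\ne l}D_m$ playing the role of the parameter domain. Each $S_l$ raises the Log exponent by exactly one, so after the $j-1$ operators $S_1,\ldots,S_{j-1}$ the function $S_1\cdots S_{j-1}f_j$ lies in $C^{k,L^\alpha Log^{\nu+j-1}L}(\Omega)$ with
\[
\|S_1\cdots S_{j-1}f_j\|_{C^{k,L^\alpha Log^{\nu+j-1}L}(\Omega)}\le C\|f_j\|_{C^{k,L^\alpha Log^\nu L}(\Omega)}.
\]
I then apply Proposition \ref{ok} to $T_j$ at the index $\nu+j-1$; since $T_j$ preserves the Log-Hölder space for every real exponent, the summand $T_j S_1\cdots S_{j-1}f_j$ remains in $C^{k,L^\alpha Log^{\nu+j-1}L}(\Omega)$ with norm controlled by $C\|f_j\|_{C^{k,L^\alpha Log^\nu L}(\Omega)}$. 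Finally, because $j-1\le n-1$, the continuous embedding $C^{k,L^\alpha Log^{\nu+j-1}L}(\Omega)\hookrightarrow C^{k,L^\alpha Log^{\nu+n-1}L}(\Omega)$ recorded in the introduction places every summand in the common target space $C^{k,L^\alpha Log^{\nu+n-1}L}(\Omega)$ at the cost of a further constant.

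I expect the only point genuinely requiring care to be the invocation of Theorem \ref{main} when $n\ge 3$: there the parameter set is $\prod_{m\ne l}D_m\subset\mathbb{C}^{n-1}$ rather than a subset of $\mathbb R$ or $\mathbb C$. I would note that the proof of Theorem \ref{main} is insensitive to the dimension of the parameter domain, since it rests on Lemma \ref{comp} together with estimates expressed purely through $|h|$ for increments $h$ in the parameter directions, and therefore extends verbatim to parameter domains in $\mathbb R^m$ or $\mathbb C^m$. Beyond this, one checks that the intermediate compositions are well defined, which is immediate because each intermediate function lies in some $C^{k,L^\alpha Log^{\mu}L}(\Omega)\subset C^{k,\epsilon}(\Omega)$ and the operators in \eqref{TS} act along distinct slices; and that the accumulated constant depends only on $\Omega,k,\alpha,\nu$, which holds because at most $n$ operator-norm bounds and embeddings are chained, each with a constant of the required form.
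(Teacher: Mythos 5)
Your proposal is correct and takes essentially the same approach as the paper: both arguments estimate the sum termwise, chain Theorem~\ref{main} once per factor $S_l$ (each raising the Log exponent by one) with Proposition~\ref{ok} for $T_j$, and invoke the monotone embedding of Log-H\"older spaces to land every summand in $C^{k,L^\alpha Log^{\nu+n-1}L}(\Omega)$ --- the paper merely runs the same bookkeeping in reverse, stripping operators from the outside in and applying the embedding to $f_j$ rather than to the composed term. Your explicit observation that Theorem~\ref{main} must be extended to parameter domains in $\mathbb{C}^{n-1}$ when $n\ge 3$, justified by noting that Lemma~\ref{comp} and the increment estimates depend only on $|h|$, addresses a point the paper uses implicitly without comment.
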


\begin{proof}
 The operator $T$ in (\ref{key}) is well defined on $C^{k,  L^\alpha Log^{\nu}L }(\Omega)$ due to  Theorem  \ref{ok1} and Proposition \ref{ok}.  Moreover, for each $1\le j\le n$,
  \begin{equation*}
    \begin{split}
      \|\prod_{l=1}^{j-1}T_jS_lf_j\|_{C^{k,  L^\alpha Log^{\nu +n-1}L}(\Omega)}&\le C \|\prod_{l=1}^{j-1}S_lf_j\|_{C^{k,  L^\alpha Log^{\nu +n-1}L}(\Omega)}\\
      &\le C \|\prod_{l=1}^{j-2}S_lf_j\|_{C^{k,  L^\alpha Log^{\nu +n-2}L}(\Omega)}\\
      &\le \cdots\\
      &\le C \|f_j\|_{C^{k, L^\alpha Log^{\nu+n-j}L}(\Omega)}\\
      &\le C \|f_j\|_{C^{k, L^\alpha Log^{\nu}L}(\Omega)}.
    \end{split}
  \end{equation*}
 Therefore, $\|T\mathbf f\|_{C^{k,  L^\alpha Log^{\nu +n-1}L}(\Omega)}\le C \|\mathbf f\|_{C^{k,  L^\alpha Log^{\nu}L}(\Omega)} $.
 \end{proof}

\begin{proof}[Proof of Theorem \ref{cor}]
 When $\mathbf f$ is $\bar\partial$-closed, $T\mathbf f$ defined by (\ref{key}) satisfies $\bar\partial T\mathbf f = \mathbf f$ (in the sense of distributions if $k=0$) by \cite{PZ}. The rest of the theorem follows from Theorem \ref{ka}.
\end{proof}

\begin{proof}[Proof of Example \ref{ex2}]
   $\mathbf f$ is well defined in $\triangle^2$ and $\mathbf f = (z_1-1)^{k+\alpha} \log^\nu (z_1-1)d\bar z_2\in C^{k, L^\alpha Log^\nu L}(\triangle^2)$. Assuming    $u\in C^{k, L^\beta Log^\nu L}(\triangle^2)$ solves $\bar\partial u =\mathbf f$ in $\triangle^2$ for some $\beta>\alpha$, then there exists a holomorphic function $h$  in $\triangle^2$ such that $u = h +(z_1-1)^{k+\alpha}\log^\nu  (z_1-1)\bar z_2$.

  Now  consider $w(\xi): =\int_{|z_2|=\frac{1}{2}}u(\xi, z_2)dz_2$ on $\xi\in \triangle=\{z\in \mathbb C: |z|<1\}$. Since $u\in C^{k, L^\beta Log^\nu L}(\triangle^2)$,  $w\in C^{k, L^\beta Log^\nu L}(\triangle)$ as well. On the other hand, a direct computation gives
  \begin{equation*}
    \begin{split}
     w(\xi) =&\int_{|z_2|=\frac{1}{2}} (\xi-1)^{k+\alpha}\log^\nu  (\xi-1)\bar z_2dz_2 \\
     =& (\xi-1)^{k+\alpha}\log^\nu  (\xi-1)\int_{|z_2|=\frac{1}{2}} \frac{1}{4 z_2}dz_2\\ =&\frac{\pi i(\xi-1)^{k+\alpha}\log^\nu  (\xi-1)}{2}.
   \end{split}
  \end{equation*}
  This  contradicts with the fact that $(\xi-1)^{k+\alpha}\log^\nu  (\xi-1)\notin C^{k, L^\beta Log^\nu L}(\triangle)$ for any $\beta>\alpha$.
  \end{proof}

 \noindent Yifei Pan, pan@pfw.edu, Department of Mathematical Sciences, Purdue University Fort Wayne,
Fort Wayne, IN 46805-1499, USA \medskip

\noindent Yuan Zhang, zhangyu@pfw.edu, Department of Mathematical Sciences, Purdue University Fort Wayne,
Fort Wayne, IN 46805-1499, USA

\end{document}